\newtheorem{theorem}{Theorem}[section]
\newtheorem{corollary}[theorem]{Corollary}
\newtheorem{proposition}[theorem]{Proposition}
\theoremstyle{definition}
\newtheorem{definition}[theorem]{Definition}
\theoremstyle{notation}
\theoremstyle{remark}
\newtheorem{remark}[theorem]{Remark}
\newtheorem{remarks}[theorem]{Remarks}
\numberwithin{equation}{section}
\numberwithin{equation}{subsection}
\newcommand{\be}%
  {\protect\setcounter{equation}{\value{subsubsection}}}
  \newcommand{\ee}%
   {\protect\setcounter{subsubsection}{\value{equation}}}
\def \rma{\rm a}
\def \BU{\rm {BU}}
\def \BGL{\rm {BGL}}
\def \BN{\rm {BN}}
\def \B{\rm B}
\def \rmB{\rm B}
\def \BG{\rm {BG}}
\def \Cl{\mathbb C}
\def \colim{\underset \rightarrow  {\hbox {lim}}}
\def \colimm{\underset {m \rightarrow \infty}  {\hbox {lim}}}
\def \colimi{\underset {i \rightarrow \infty}  {\hbox {lim}}}
\def \colimn{\underset {n \rightarrow \infty}  {\hbox {lim}}}
\def \colimK.{\underset {\underset K^.  \rightarrow}  {\hbox {lim}}}
\def \colimU.{\underset {\underset U_.  \rightarrow}  {\hbox {lim}}}
\def \colimE{\underset {\underset E  \rightarrow}  {\hbox \be \begin{equation}
			\xymatrix{ {[\BGL _{\infty}, \rmQ (\BN (\rmT ))]}  \ar@<ex>[r]^{p_*} \ar@<1ex>[d]^{p^*} &{\tilde K^0(\BGL _{\infty})} \ar@<1ex>[r]^c \ar@<1ex>[d]^{p^*} & {h^0(\BGL _{\infty})} \ar@<1ex>[d]^{p^*} \\
				{[\rmQ (\BN (\rmT ) ), \rmQ (\BN (\rmT ) )]}  \ar@<1ex>[r]^{p_*}  &{\tilde K^0(\BN (\rmT ) )} \ar@<1ex>[r]^c  & {h^0(\BN (\rmT ) ) } }
		\end{equation} \ee{lim}}}
\def \rmd{\rm d}
\def \rmE{{\rm E}}
\def \cE{\mathcal E}
\def \EG1{E{(G \times {\mathbb C}^*)}{\underset {G\times {\mathbb C}^*} 
\times}}
\def \EZ(s)1{E{(Z(s) \times {\mathbb C}^*)}{\underset {(Z(s)\times {\mathbb
C}^*)}  \times}}
\newcommand{\eps}{ \, {\boldsymbol\varepsilon} \,}
\def \EM(u){EM(u){\underset {M(u)}  \times}}
\def \EM(us){EM(u,s){\underset {M(u, s)}  \times}}
\def \EG{\rm {EG}}
\def \EGL{\rm {EGL}}
\def \GL{\rm GL}
\def \rmG{\rm G}
\def \bH{\rm {\bf H}}
\def \rmH{\rm H}
\def  \rmh{\rm h}
\def \invlim1{\underset {\infty \leftarrow q}  {\hbox {lim}}^1}
\def \rmK{\rm K}
\def \L3{\Lambda \times \Lambda \times \Lambda}
\def \L2{\Lambda \times \Lambda}
\def \lim{\underset \leftarrow  {\hbox {lim}}}
\def \longright2arrow{{\overset \longrightarrow  {\overset {} 
\longrightarrow}}}
\def \L{L\times \Cl ^*}
\def \rmN{\rm N}
\def \P{\mathbb P}
\def \rmP{\rm P}
\def \Spt{\rm {Spt}}
\def \rmp{\rm p}
\def \rmQ{\rm Q}
\def \rmq{\rm q}
\def \ra{\rightarrow}
\def \RG^{R(G)^{\hat {}}\ }
\def \res{respectively}
\def \rmr{\rm r}
\def \SH{{\mathcal S}{\mathcal H}}
\def \Sm{\rm {Sm}}
\def \rmS{\rm S}
\def\Spt{\rm {\bf Spt}}
\def \SH{{\mathcal S}{\mathcal H}}
\def\Spt{\rm {\bf Spt}}
\def \St{\rm St}
\def \topGcoh*{^{top, *} _{G}}
\def \topGho*{ _{top,*} ^{G}}
\def \T{{\mathbf T}}
\def \rmT{\rm T}
\def \Th{\rm Th}
\def \rmU{\rm U}
\def \rmV{\rm V}
\def \rmW{\rm W}
\def \X{\mathcal X}
\def \rmX{\rm X}
\def \Y{\mathcal Y}
\def \rmY{\rm Y}
\def \Z(s){Z(s) \times {\mathbb C}^*}
\def \Z{\mathcal Z}
\def \rmZ{\rm Z}
\begin{document}

\title{The Motivic Segal-Becker Theorem for Algebraic K-Theory}

\author{Roy Joshua}
\address{Department of Mathematics, Ohio State University, Columbus, Ohio,
	43210, USA.}
\email{joshua.1@math.osu.edu}
\author{Pablo Pelaez}
\address{Instituto de Matem\'aticas, Ciudad Universitaria, UNAM, DF 04510, M\'exico.}
\email{pablo.pelaez@im.unam.mx}
\thanks{  }  
\begin{abstract}  The present paper is a continuation of earlier work by Gunnar Carlsson and the first author on a motivic variant of the classical Becker-Gottlieb transfer and an additivity theorem for such a transfer by the present authors. 
Here, we establish a motivic variant of the classical Segal-Becker theorem relating the classifying space of a $1$-dimensional torus with the spectrum defining algebraic K-theory. 
\end{abstract}
\maketitle

\centerline{\bf Table of contents}
\vskip .2cm 
1. Introduction
\vskip .2cm
2.  The motivic Segal-Becker theorem
\vskip .2cm
3. The motivic transfer and the motivic Gysin maps associated to projective smooth morphisms
\vskip .2cm  \indent \indent
\markboth{Roy Joshua and Pablo Pelaez}{The Motivic Segal-Becker Theorem}
\input xypic
\vfill \eject

\section{\bf Introduction}
\label{intro}
A classical result due to Segal from the early 1970s (see \cite{Seg})
is a theorem that shows the classifying space of the infinite unitary group,
namely $\BU$, is a split summand of $\colimm \Omega_{S^1}^m((S^1)^m \wedge {\mathbb C}{\mathbb P}^{\infty})$. A year later, Becker (see \cite{Beck})
 proved a similar result for the infinite orthogonal group in the place
 of the infinite unitary group $\rmU$ and ${\rm {BO(2)}}$ in the place of ${\mathbb C}{\mathbb P}^{\infty}$.
 \vskip .2cm
 The purpose of this paper is to consider similar problems in the motivic
 world and for Algebraic K-Theory, making use of a theory of the Motivic Becker-Gottlieb transfer 
 worked out by Gunnar Carlsson and the first author in \cite{CJ20} and the Additivity Theorem for such a transfer worked out in \cite{JP20} by the authors.
 We will adopt the terminology and conventions from \cite{CJ20} as well as
 other terminology that has now become standard. 
 As such, the base scheme will be a perfect field ${\it k}$ 
 and we will restrict to the category of smooth schemes of finite type over ${\it k}$.
 This category will be denoted $\Sm({\it k})$ and will be provided with the {\it Nisnevich topology}. ${\rm {PSh}}_*({\it k})$ will denote the category of pointed simplicial presheaves 
 on this site. This category will be made motivic, by inverting the affine line ${\mathbb A}^1$ as in \cite{MV}: the pointed simplicial presheaves in this category will be referred to as motivic spaces.  We will let $\T={\mathbb P}^1$ pointed at $\infty$. We will denote $\T^{\wedge n}$ throughout by $\T^n$. Then a motivic spectrum
 $\rmE$ will denote a sequence $\{\rmE_n|n \ge 0\}$ of motivic spaces provided with structure maps $\T \wedge \rmE_n \ra \rmE_{n+1}$. The category of motivic spectra will be denoted $\Spt({\it k})$, or just $\Spt$ if the choice of ${\it k}$ is clear. Then a motivic spectrum $\rmE$ will be called an $\Omega_{\T}$-spectrum if it is level-wise fibrant and the adjoint to the structure maps given by $\{\rmE_n \ra \Omega_{\T}(\rmE_{n+1})|n\}$ are all motivic weak-equivalences. 
 \vskip .2cm
Then, the first observation (see \cite[6.2]{VV98}) is that Algebraic K-theory is represented by the
motivic-spectrum with ${\mathbb Z} \times \BGL_{\infty}$ as the motivic space in each degree, with the structure map given by the Bott-periodicity:
${\mathbb Z} \times \BGL _{\infty} \simeq \Omega_{\T}({\mathbb Z} \times \BGL _{\infty})$. We will denote this motivic spectrum by ${\mathbf K}$. Therefore,
\[\rmK ^0(X) \simeq [\Sigma_{\T}^{\infty}(X_+), {\mathbf K}] \cong [X, {\mathbb Z} \times \BGL _{\infty}] \]
where the first $[\quad, \quad]$ (the second $[\quad, \quad]$) denotes
the hom in the stable motivic homotopy category (the corresponding unstable pointed motivic  homotopy category, \res.)
\vskip .2cm
We observe in Proposition ~\ref{BGL.Omega} that there is an $\Omega_{\T}$-motivic spectrum whose $0$-th term is given by the motivic space $\BGL _{\infty}$. Assuming this, 
 the first main result of this paper is the following theorem, which we call the {\it motivic
	Segal-Becker Theorem} in view of the fact that such a result was proven for topological complex K-theory, making use of  complex unitary groups, by Segal (see \cite{Seg}) 
and for real K-theory, making use of orthogonal groups, by Becker (see \cite{Beck}). (In fact, Becker's proof, making use of the transfer, also applies to topological complex K-theory.) For a motivic space $\rmP$, we will let $\rmQ ( \rmP ) = \colimn \Omega_{\T}^n \T^{\wedge n}(\rmP)$.
Of key importance for us is the following map:
\be \begin{equation}
  \label{splitting.map}
\lambda: \rmQ(\B{\mathbb G}_m) \ra \rmQ(\colim_n \BGL _{n}) = \rmQ(\BGL _{\infty}) {\overset q \ra} \BGL _{\infty},
\end{equation} \ee
\vskip .1cm \noindent
where the map $q$ is the obvious one induced by the fact that $\BGL_{\infty}$ is the $0$-th space of an $\Omega_{\T}$-spectrum: see Proposition ~\ref{BGL.Omega}. The map $\rmQ(\B{\mathbb G}_m) \ra \rmQ(\colim_n \BGL _{n}) = \rmQ(\BGL _{\infty})$ 
is induced by the inclusion,
${\mathbb G}_{\rm m} \ra \GL _n \ra \GL _{\infty}$, where the first map is the diagonal imbedding.
\begin{theorem} 
	\label{mainthm.1}
	(The motivic Segal-Becker theorem for Algebraic K-Theory)
	(i) Assume that the base scheme is a field ${\it k}$ of characteristic $0$. Then the map in ~\eqref{splitting.map}  induces a  surjection for every pointed motivic space $\rmX$ that is a compact object
	in the  unstable pointed motivic  homotopy category:
	\[ [ \rmX, \rmQ (\B{\mathbb G}_m)] \ra [\rmX, \BGL _{\infty}].\]
	(Recall that a motivic space $\rmX$ is a compact object in the unstable pointed motivic homotopy category, if $Map(X, \quad)$
	commutes with all small colimits in the second argument, and where $Map(\quad, \quad)$ denotes the simplicial mapping space.)
	\vskip .1cm \noindent
	(ii) Assume that the base scheme is a perfect field ${\it k}$  of positive characteristic $p>0$. \footnote{The assumption that
	${\it k}$ be perfect may be dropped in view of recent results such as in \cite{EK} and \cite[Theorem 10.12]{BH}.}
	Then, after inverting $p$,  the map in ~\eqref{splitting.map}  
	induces a surjection for every pointed motivic space $\rmX$ that is a compact object
	in the corresponding unstable pointed motivic homotopy category:
	\[ [ \rmX, \rmQ (\B{\mathbb G}_m)] \ra [\rmX, \BGL _{\infty}].\]
\end{theorem}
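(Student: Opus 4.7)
The plan is to adapt the classical Segal--Becker argument to the motivic setting. Given $f : X \to \BGL_\infty$ with $X$ a compact motivic space, we construct an explicit preimage $\sigma_f \in [X, \rmQ(\B{\mathbb G}_m)]$ under $\lambda$, using the motivic Becker--Gottlieb transfer of~\cite{CJ20}, the additivity theorem of~\cite{JP20}, and the Gysin formalism for projective smooth morphisms developed in Section~3.

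First, since $X$ is a compact object and $\BGL_\infty$ is the sequential colimit of the $\BGL_n$, the class $f$ factors through some $f_n : X \to \BGL_n$. Pulling back the universal flag fibration $BT_n \to \BGL_n$ along $f_n$ produces a projective smooth morphism $\pi : Y_n \to X$ with fiber the full flag variety $\GL_n/T_n$, over which the pulled-back tautological bundle splits as $\pi^* V \cong L_1 \oplus \cdots \oplus L_n$. The splitting yields a canonical classifying map $Y_n \to (\B{\mathbb G}_m)^n$; using the H-space structure on $\rmQ(\B{\mathbb G}_m)$ inherited from the infinite-loop-space structure of Proposition~\ref{BGL.Omega}, the sum of the $n$ coordinate projections produces a map $\widetilde\rho : Y_n \to \rmQ(\B{\mathbb G}_m)$ representing $\sum_i [L_i] \in \tilde K^0(Y_n)$.

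Next, apply the motivic transfer/Gysin machinery for $\pi$ to push $\widetilde\rho$ down to a class $\sigma_f : X \to \rmQ(\B{\mathbb G}_m)$. To verify $\lambda \circ \sigma_f \simeq f$, view the composite in $\tilde K^0(X)$: by compatibility between the motivic pushforward and the algebraic K-theoretic Gysin map, it computes $\pi_*^K(\sum_i [L_i]) = \pi_*^K(\pi^* V)$. The projection formula gives $V \cdot \pi_*^K(1_{Y_n})$, and Borel--Weil--Bott vanishing for flag bundles ($R^{>0}\pi_* \mathcal O_{Y_n} = 0$ and $\pi_* \mathcal O_{Y_n} = \mathcal O_X$) gives $\pi_*^K(1_{Y_n}) = 1$. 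Therefore $\lambda \circ \sigma_f \simeq V = f$, establishing surjectivity.

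The principal technical obstacle is identifying the K-theoretic realization of the motivic transfer with the algebraic Gysin pushforward so that the projection formula and BWB vanishing apply. A naive application of the Becker--Gottlieb transfer would introduce the Euler-characteristic factor $\chi(\GL_n/T_n) = n!$, which cannot be divided away integrally; bypassing this requires either the motivic Gysin formalism from Section~3 (exploiting that algebraic K-theory is orientable), or passing through the normalizer $\BN(\rmT_n) \subset \GL_n$, for which $\GL_n/\BN(\rmT_n)$ has Euler characteristic $1$, together with a Weyl-invariant sum construction. In positive characteristic $p$, inverting $p$ is needed because the motivic transfer and Gysin constructions in~\cite{CJ20, JP20} depend on Riemann--Roch and duality arguments carrying $p$-torsion denominators.
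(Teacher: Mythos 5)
There is a genuine gap in the step where you ``apply the motivic transfer/Gysin machinery for $\pi$ to push $\widetilde\rho$ down to a class $\sigma_f: X \ra \rmQ(\B{\mathbb G}_m)$,'' and the obstruction you mention as a side-remark is in fact fatal to the flag-bundle route. To produce $\sigma_f \in [X, \rmQ(\B{\mathbb G}_m)] \cong [\Sigma_{\T}^{\infty}X_+, \Sigma_{\T}^{\infty}\B{\mathbb G}_m]$ from $\widetilde\rho \in [\Sigma_{\T}^{\infty}Y_{n,+}, \Sigma_{\T}^{\infty}\B{\mathbb G}_m]$ you need a stable wrong-way map $\Sigma_{\T}^{\infty}X_+ \ra \Sigma_{\T}^{\infty}Y_{n,+}$, and the only candidate is the Becker--Gottlieb transfer. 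A $\rmK$-theoretic Gysin map does not help here: the cohomology theory represented by $\Sigma_{\T}^{\infty}\B{\mathbb G}_m$ is not orientable, so there is no Gysin pushforward for $h^{0}(-) = [-, \rmQ(\B{\mathbb G}_m)]$, only the transfer. But then, by Theorem~\ref{compat.Gysin}, the image in $\widetilde\rmK^{0}(X)$ under $\lambda$ is
\[
\lambda\bigl(\widetilde\rho \circ tr_\pi\bigr) \;=\; tr_\pi^*\!\left(\textstyle\sum_i [L_i]\right) \;=\; \pi_*^{\rmK}\!\left(\pi^*V \cup eu(\tau_\pi)\right),
\]
not $\pi_*^{\rmK}(\pi^*V)$. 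For the full flag bundle $\GL_n/\rmT_n$ the Euler class of the relative tangent bundle is non-trivial and $\pi_*^{\rmK}(eu(\tau_\pi))$ is the $\rmK$-theoretic (equivalently, the image of the motivic) Euler characteristic of the fiber, which is $n!$ rather than $1$; the projection formula then yields $n!\cdot V$, not $V$. Thus the chain of equalities in your third paragraph skips exactly the step that fails, and the ``bypass'' you invoke (``the motivic Gysin formalism \ldots exploiting that algebraic K-theory is orientable'') does not apply because the obstruction is at the sphere-spectrum level, before one ever maps to $\rmK$-theory.

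Your alternative parenthetical --- ``passing through the normalizer $\BN(\rmT_n) \subset \GL_n$ \ldots together with a Weyl-invariant sum construction'' --- is essentially the route the paper takes, but filling that in is the entire content of \S2 and cannot be waved in as a remark. The paper proves (Proposition~\ref{split.0}, using \cite{JP20} and \cite{An}) that $\chi^{{\mathbb A}^1}(\GL_n/\rmN(\rmT_n))$ is a \emph{unit} in $\rm{GW}(k)$ (or $\rm{GW}(k)[p^{-1}]$), so the Becker--Gottlieb transfer for $\BN(\rmT_n) \ra \BGL_n$ is a genuine stable splitting, giving surjectivity onto $[\rmX, \BGL_\infty]$ from $[\rmX, \rmQ(\BN(\rmT))]$ (Proposition~\ref{key.1}). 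Since the tautological bundle over $\BN(\rmT_n)$ does \emph{not} split into line bundles (the Weyl group permutes eigenlines), the paper then introduces a degree-$n$ finite \'etale cover ${\widetilde {\BN_{\GL_n}(\rmT_n)^{gm, n}}} = \St_{2n,n}/({\mathbb G}_m \times \rmN_{\GL_{n-1}}(\rmT_{n-1})) \ra \BN_{\GL_n}(\rmT_n)^{gm,n}$ over which the first eigenline is well-defined, and uses a \emph{second} transfer for this \'etale map; by Corollary~\ref{tr.finite.etale.maps} (the relative tangent bundle of an \'etale map is trivial, killing the Euler class factor) this transfer agrees with the coherent pushforward, which recovers the rank-$n$ bundle. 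Your outline conflates these two transfers into one and your computation is not correct as written. Finally, the stated reason for inverting $p$ --- ``Riemann--Roch and duality arguments carrying $p$-torsion denominators'' --- is imprecise: the paper's stated reason is that motivic Spanier--Whitehead duality for smooth schemes over a field of characteristic $p$ is only available after inverting $p$.
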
 
\vskip .2cm
\begin{remarks} 1. Localizing at the prime $p$ in the unstable pointed motivic homotopy category, as used in statement (ii) and elsewhere in this paper
is discussed in detail in \cite{AFH}. One may also observe that, though $[\quad, \quad]$ as used in statement (ii) denotes  
Hom in the unstable pointed motivic  homotopy theory, since the target space is an infinite $\T$-loop space, the above Hom identifies readily with
a Hom in the motivic stable homotopy category, after making use of the adjunction between taking $\Omega_{\T}$-loops and $\T$-suspension.
 \vskip .1cm \noindent
2. One should view the above results as a rather weak-form of the Segal-Becker theorem, in the sense that
 we are able to prove only the surjectivity and (not split surjectivity) of the above maps, and also only for objects
  $\rmX$ that are compact objects in the corresponding  unstable pointed motivic homotopy category. We hope to consider questions on
  split surjectivity in a sequel to this paper, as it seems to involve considerable additional work and certain techniques used in
  establishing such splittings classically do not seem to extend readily to the motivic framework.
  \vskip .1cm \noindent
  3. It is possible there is an analogue of the above theorem for Hermitian K-theory (see: \cite{Ho05}, \cite{Sch16}) which is represented by the classifying space
 of the infinite orthogonal groups. In fact, much of the proof for the case of algebraic K-theory seems to carry over to the Hermitian case, the main difficulty
 being to prove an analogue of Theorem ~\ref{compat.Gysin}. We hope to return to this question elsewhere.
\end{remarks}
\vskip .2cm
Our approach to all of the above is via a theory of motivic transfers. Such 
a theory of motivic and \'etale variants of the classical Becker-Gottlieb transfer were developed by Gunnar
Carlsson  and the first author in \cite{CJ20} and the additivity of transfers was established in a general framework 
by the present authors in
\cite{JP20}, though special cases such as Snaith-splitting for the suspension spectrum of ${\rm BGL_n}$ appears in \cite{K18}.
Theorem ~\ref{mainthm.1} is proven by making intrinsic use of this transfer, just as was done by Becker and Gottlieb, making use of the classical Becker-Gottlieb transfer. 
See  \cite{Beck}, (and also, \cite{BG75} and \cite{BG76}). 
\vskip .2cm
{\it In fact we summarize the main ideas of the proof of Theorem ~\ref{mainthm.1}
(as well as an overview of the paper) as follows}: The splitting provided by the motivic Becker-Gottlieb transfer as in Proposition ~\ref{split.0} enables us to prove
Proposition ~\ref{key.1}. This shows the map 
\[\bar q= q \circ \rmQ(\rmp): \rmQ ( \BN_{\rm GL_{\infty}} (\rmT ) )= \rmQ (\colim_n \BN_{\rm GL_n}(T_n) ) \ra \rmQ (\colim_n \BGL _{n}) = \rmQ (\BGL _{\infty}) {\overset q \ra } \BGL _{\infty}\]
induces a surjection
\[ [\rmX,  \rmQ ( \BN_{\rm GL_{\infty}} (\rmT ) )] {\overset {{\bar q}_*} \ra} [\rmX, \BGL _{\infty}],\]
for every compact object $\rmX$ in the unstable pointed motivic  homotopy category. (Here ${\rm N}_{\rm GL_n}({\rm T}_n)$ denotes the normalizer of 
the maximal torus of diagonal matrices in ${\rm GL}_n$.) Then we show in Propositions ~\ref{key.comm.triangle.0} and ~\ref{key.diagms} that the map $\bar q_*$ above factors through $\lambda_*$,
where $\lambda$ is the map in ~\eqref{splitting.map}, thereby proving the theorem. 
\vskip .2cm
It may be worth pointing out
this involves {\it a second, somewhat different use of the transfer, this time as defined in ~\eqref{rel.transf}, and with
a key property proven in Corollary ~\ref{tr.finite.etale.maps}.}
These occupy most of section 2 of the paper. 
While Proposition ~\ref{key.1} is rather
straightforward given the properties of the motivic Becker-Gottlieb transfer, Proposition ~\ref{key.diagms} is a bit involved:
here one needs to know the relationship between maps defined by the transfer as in ~\eqref{rel.transf} and Gysin maps, for at least finite \'etale maps in orientable motivic cohomology theories.  This is
 discussed in section 3 of the paper.
\vskip .2cm \noindent
{\bf Acknowledgment.} The authors would like to thank Gunnar Carlsson for 
helpful discussions and to a referee for undertaking a careful and detailed reading of the paper and for making several critical
comments and valuable suggestions, that helped the authors to sharpen their
results and improve the exposition. Needless to say the authors are greatly indebted to James Becker whose proof
of the corresponding result in topology (see \cite{Beck}) 
serves as the basis of our current work.

\vskip .2cm
\subsection{Basic assumptions and terminology}
We will assume throughout that the base scheme is a perfect field. (The assumption that
	${\it k}$ be perfect can be dropped, if one prefers, in view of recent results such as in \cite{EK} and \cite[Theorem 10.12]{BH}.) Then $\Spt = \Spt({\it k})$ will
denote the  category of motivic spectra on the big Nisnevich site of ${\it k}$, with $\SH = \SH({\it k})$ denoting the
corresponding motivic stable homotopy category. If ${\it k}$ is of characteristic $0$, no further assumptions are needed.
\vskip .1cm
However, if $char ({\it k}) = p>0$, then we  will
only consider $\SH[p^{-1}]$, which is the motivic stable homotopy category on ${\it k}$, with the prime $p$ inverted. (The main reason for this restriction is that a theory of
Spanier-Whitehead duality holds only after inverting $p$ in this case.) Given a motivic spectrum ${\rm E}$, and a motivic space $\rmX$,
the generalized motivic cohomology represented by ${\rm E}$ is given by the bi-graded theory
\be \begin{equation}
  \label{gen.motivic.coh}
  {\rm h}^{\rm p, q}(\rm X, {\rm E}) = [\Sigma_{\T}^{\infty}\rmX, ({\rm S}^1)^{p-q} \wedge {\mathbb G}_m^{\wedge q} {\rm E}]
    \end{equation} \ee
with $[\quad, \quad ]$ denoting the Hom in the motivic stable homotopy category.

\vskip .1cm 
In both the above cases, we do not require the existence of a symmetric monoidal structure
on the category of spectra itself, that is, it is sufficient to assume the smash product of spectra is homotopy 
associative and homotopy commutative. 
\vskip .1cm
\subsection{\bf Geometric classifying spaces}
\label{geom.class.sp}
We begin by recalling
briefly the construction of the {\it geometric classifying space of a linear algebraic group}: see for example, \cite[section 1]{Tot}, \cite[section 4]{MV}. Let 
$\rmG$ denote a linear algebraic group over $\rmS =Spec \, {{\it k}}$, that is,  a closed subgroup-scheme in $\GL_n$ over $\rmS$ for some n. For a  (closed) imbedding 
$i : \rmG \ra \GL_n$ as a closed subgroup-scheme, {\it the geometric classifying space} $\rmB_{gm}(\rmG; i)$ of $\rmG$ with respect to $i$ is defined as follows. For $m \ge   1$, let 
$\rmE\rmG^{gm,m}=U_m(G)=U({\mathbb A}^{nm})$ be 
the open sub-scheme of ${\mathbb A}^{nm}$ where the diagonal action of 
$\rmG$ determined by $i$ is free. By choosing $m$ large enough, one can always ensure that 
$\rmU({\mathbb A}^{nm})$ is non-empty and the quotient $\rmU({\mathbb A}^{nm})/G$ is a quasi-projective scheme.
We will further choose such a family $\{\rmU({\mathbb A}^{nm})|m\}$ so that it satisfies the hypotheses in \cite[Definition 2.1, section 4.2]{MV} defining an
 {\it admissible gadget}.
\vskip .1cm
Let $\rmB\rmG^{gm,m}=\rmV_m(\rmG)=U_m(\rmG)/\rmG$ denote the quotient 
$\rmS$-scheme (which will be a quasi-projective variety ) for the 
action of $\rmG$ on $\rmU_m(\rmG)$ induced by this (diagonal) action of $\rmG$ on ${\mathbb A}^{\rm nm}$; the projection $ \rmU_m(\rmG) \ra \rmV_m(\rmG)$ defines $\rmV_m(\rmG)$ as the 
quotient  of $\rmU_m(\rmG)$ by the free action of $\rmG$ and $\rmV_m(\rmG)$ is thus smooth. We have closed imbeddings 
$\rmU_m(\rmG) \ra \rmU_{m+1}(\rmG)$ and $\rmV_m(\rmG) \ra \rmV_{m+1}(\rmG)$ corresponding to the imbeddings 
$Id \times  \{\rm0\} : {\mathbb A}^{nm} \ra {\mathbb A}^{nm } \times {\mathbb A}^n$. We set $\rmE\rmG^{gm} = \{U_m(\rmG)|m\} = \{ \rmE\rmG^{gm,m}|m\}$ and 
$ \rmB\rmG^{gm} = \{ \rmV_m(\rmG)|m\}$ which are ind-objects in the category of schemes. (If one prefers, one may view each $\rmE\rmG^{gm,m}$ ($\rmB\rmG^{gm,m}$)
as a sheaf on the big Nisnevich (\'etale) site of smooth schemes over ${\it k}$ and then view $\rmE\rmG^{gm}$ ($\rmB\rmG^{gm}$) as the 
the corresponding colimit taken in the category of sheaves on $({\rm {Sm/{\it k}}})_{Nis}$ or on $({\rm {Sm/{\it k}}})_{et}$.)
\vskip .1cm
\begin{definition}
 \label{BG}
  We will denote $\rmE\rmG^{gm}$ by ${\rm EG}$ and $\rmB\rmG^{gm}$ by ${\rm BG}$ throughout the paper.
\end{definition}
\vskip .1cm
Given a scheme $\rmX$  of finite type over $\rmS$ with a $\rmG$-action, we let $\rmU_m(\rmG){\underset {\rmG} \times} \rmX$ denote the {\it balanced product}, 
where $(u, x)$ and $(ug^{-1}, gx)$ are identified for all $(u, x) \eps \rmU_m \times \rmX$ and $g \eps \rmG$.  Since the $\rmG$-action on $\rmU_m(\rmG)$ is free, $\rmU_m(\rmG){\underset {\rmG} \times} \rmX$
exists as a geometric quotient which is also a quasi-projective scheme in this setting, in case $\rmX$ is assumed to be quasi-projective: see \cite[Proposition 7.1]{MFK}. (In case $\rmX$ is an algebraic space of
finite type over $\rmS$, the above quotient also exists, but as an algebraic space of finite type over $\rmS$.)
\vskip .2cm
Next we recall a particularly nice way to construct geometric classifying spaces for closed subgroups of ${\rm GL_n}$ making use of
the Stiefel varieties.
\vskip .2cm
\begin{definition}
\label{BGL.1}
(Stiefel varieties and Grassmannians). Let $n$ denote a fixed positive integer and let $i\ge 0$ denote an integer.
 We let $\St_{n+i,n}$ denote the set of all $(n+i)\times n$-matrices of rank $n$, or equivalently the set of all injective linear transformations
 ${\mathbb A}^n \ra {\mathbb A}^{n+i}$. We view this as an open subscheme of the affine space ${\mathbb A}^{(n+i)\times n}$. The group
 ${\rm GL}_n$ acts on $\St_{n+i, n}$ through its action on ${\mathbb A}^n$: we view this as a right action on the 
 set of all $(n+i)\times n$-matrices. This is a free action and the quotient is the Grassmann variety of $n$-planes in ${\mathbb A}^{n+i}$, and denoted
 ${\rm Grass}_{n+i, n}$. 
\end{definition}
\vskip .1cm
As observed on \cite[p. 138]{MV}, for each fixed positive integer $n$, the family $\{\St_{n+i. n}|i \ge 0\}$ satisfies the
 conditions in \cite[Definition 2.1, p. 133]{MV}, so that it defines what is there called an {\it admissible gadget}. Thus 
 $\{\St_{n+i, n}/{\rm H}|i \ge 0\}$ forms finite dimensional approximations to the classifying space for any closed subgroup ${\rm H}$ 
 of ${\rm GL}_n$. Therefore, we will make the following definitions.
 \begin{definition}
 \label{BGL.2}
  \begin{enumerate}[\rm(i)]
\item ${\rm BH}^{\rm gm, i}= \St_{n+i,n}/{\rm H}$, ${\rm BH} = \colimi {\rm BH}^{\rm gm, i}$, and
\item ${\rm BGL}_{\infty} = \colimi \colimn {\St_{n+i, n}}/{\rm GL}_n = \colimn \St_{2n,n}/{\rm GL}_n$.
\end{enumerate}
\end{definition}

For any linear algebraic group $\rmG$, we will let ${\rm BG}$ denote the geometric classifying space defined
above (as in Definition ~\ref{BG} or equivalently in Definition ~\ref{BGL.2}) as an ind-scheme. (We may view this as a motivic space.)

\section{\bf The motivic Segal-Becker Theorem: proof of Theorem ~\ref{mainthm.1}.}
We begin with the following observation due to Voevodsky.
\begin{proposition}
  \label{BGL.Omega}
  There exists a motivic $\Omega_{\T}$-spectrum, $\tilde {\mathbf K}$, whose $0$-th space is 
  given by $\BGL _{\infty}$.
\end{proposition}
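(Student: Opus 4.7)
The plan is to obtain $\tilde{\mathbf K}$ as the $1$-connected cover of the motivic K-theory spectrum $\mathbf{K}$ with respect to Morel's $t$-structure on $\SH$, and then identify its $0$-th space with $\BGL_\infty$.

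First, I would recall the setup: $\mathbf{K}$ is the $\Omega_\T$-spectrum with each level equal to $\mathbb{Z}\times\BGL_\infty$, whose structure maps come from the Bott equivalence $\mathbb{Z}\times\BGL_\infty \simeq \Omega_\T(\mathbb{Z}\times\BGL_\infty)$ due to Voevodsky. Since each Grassmannian $\St_{2n,n}/\GL_n$ is a smooth rational variety, it is $\mathbb{A}^1$-connected; passing to the filtered colimit, $\BGL_\infty$ is $\mathbb{A}^1$-connected as well. Consequently $\pi_0^{\mathbb{A}^1}(\mathbf{K}_0) = \underline{\mathbb{Z}}$ is concentrated entirely in the discrete factor $\mathbb{Z}$, so that the stable homotopy sheaf $\pi_0^{\mathbb{A}^1}(\mathbf{K}) = \underline{\mathbb{Z}}$ in $\SH$.

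Next, I would define $\tilde{\mathbf K} := \tau_{\ge 1}\mathbf{K}$, the homotopy fiber of the canonical truncation $\mathbf{K}\to \tau_{\le 0}\mathbf{K}$, and then pass to a fibrant $\Omega_\T$-spectrum replacement. At the level of $0$-th spaces, the fiber sequence
\[
\tilde{\mathbf K}_0 \to \mathbf{K}_0 \to (\tau_{\le 0}\mathbf{K})_0
\]
realizes the inclusion of the basepoint $\mathbb{A}^1$-connected component, so that $\tilde{\mathbf K}_0 \simeq \{0\}\times\BGL_\infty \simeq \BGL_\infty$. Verification of the $\Omega_\T$-property at each level follows from the fact that $\Omega_\T$ preserves fiber sequences, combined with the Bott equivalence for $\mathbf{K}$.

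The main obstacle is the interaction between the stable Postnikov truncation and the levelwise identification. One has to verify that $\Omega_\T^{\infty}(\tau_{\ge 1}\mathbf{K})\simeq \BGL_\infty$ as a motivic space, which reduces to the observation that $\T=\mathbb{P}^1/\infty$ is itself $\mathbb{A}^1$-connected. This ensures that every pointed $\T$-loop of $\mathbb{Z}\times \BGL_\infty$ factors through the basepoint component, so that the Bott equivalence restricts compatibly to a motivic weak equivalence $\tilde{\mathbf K}_n \simeq \Omega_\T(\tilde{\mathbf K}_{n+1})$ at each level, giving the required $\Omega_\T$-spectrum structure with $\tilde{\mathbf K}_0 = \BGL_\infty$.
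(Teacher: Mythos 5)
The paper's proof takes $\tilde{\mathbf K}=f_1(\mathbf K)$, the $1$-effective cover of $\mathbf K$ in Voevodsky's \emph{slice filtration}, and then invokes specific computations from \cite{VV01} and \cite{Lev} (in particular that $s_0\mathbf K\simeq H\mathbb Z$ and that $\Omega^\infty_\T f_1\mathbf K\simeq\BGL_\infty$). Your proposal instead uses the $1$-connected cover $\tau_{\ge 1}\mathbf K$ in \emph{Morel's homotopy $t$-structure}. These are genuinely different truncations of $\SH$: the slice filtration is built from $\T$-effectivity, while Morel's $t$-structure is built from the homotopy modules $\underline\pi_*$. That the two might give the same $0$-th space for $\mathbf K$ is not obvious and is not established by your argument; in particular $\tau_{\le 0}\mathbf K$ kills the whole homotopy module $\underline\pi_0(\mathbf K)$ (all $\mathbb G_m$-weights), so identifying its $0$-th space with the constant sheaf $\underline{\mathbb Z}$ already needs a comparison of the heart of Morel's $t$-structure with the weight-$0$ slice $H\mathbb Z$, which is precisely the nontrivial content of the results the paper cites.

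There is also a concrete error in your last paragraph. You argue that, since $\T$ is $\mathbb A^1$-connected, every pointed $\T$-loop of $\mathbb Z\times\BGL_\infty$ factors through the basepoint component, and conclude that ``the Bott equivalence restricts compatibly to a motivic weak equivalence $\tilde{\mathbf K}_n\simeq\Omega_\T(\tilde{\mathbf K}_{n+1})$ at each level,'' implicitly with $\tilde{\mathbf K}_n\simeq\BGL_\infty$. But the observation that $\T$-loops land in the basepoint component shows exactly the opposite: it gives $\Omega_\T(\mathbb Z\times\BGL_\infty)\simeq\Omega_\T(\BGL_\infty)$, and then Bott periodicity forces $\Omega_\T(\BGL_\infty)\simeq\mathbb Z\times\BGL_\infty$, which is \emph{not} $\BGL_\infty$. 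So $\BGL_\infty$ is not a fixed point of $\Omega_\T$, and the higher spaces of any $\Omega_\T$-spectrum with $0$-th space $\BGL_\infty$ must be genuine further deloopings (connected covers), not $\BGL_\infty$ itself --- just as in topology $bu\langle1\rangle$ has $0$-th space $BU$ but first space $SU$, not $U$. The actual identification $\Omega^\infty_\T\tilde{\mathbf K}\simeq\BGL_\infty$ must come from the fiber sequence $f_1\mathbf K\to f_0\mathbf K\to H\mathbb Z$ together with the identification $\Omega^\infty_\T f_0\mathbf K\simeq\mathbb Z\times\BGL_\infty$ and $\Omega^\infty_\T H\mathbb Z\simeq\underline{\mathbb Z}$, which are the inputs from \cite[Lemma 2.2, Theorem 4.1, Lemma 4.6]{VV01} and \cite[Theorem 7.5.1]{Lev} that the paper's proof explicitly relies on; your proposal does not supply a substitute for them.
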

\begin{proof} The required spectrum is just $f_1({\mathbf K})$, where ${\mathbf K}$ denotes the $\Omega_{\T}$-spectrum representing algebraic K-theory:
see \cite[Theorem 2.2]{VV}.
That this is the case follows from \cite[Lemma 2.2]{VV01} (which holds unconditionally over any field by \cite[Theorem 7.5.1]{Lev}) and \cite[Theorem 4.1, Lemma 4.6 and its proof]{VV01}.
\end{proof}	 
\vskip .1cm
\subsection{\bf Changing base points}
\label{change.base.pts}
Recall motivic spaces are assumed to be pointed simplicial presheaves. However,
it is often necessary for us to consider a motivic space $\rmY$ viewed as
an unpointed simplicial presheaf and then provide it with an extra base point $+$.
A typical example, we run into in this paper, is when $\rmY$ is the geometric classifying space of a linear algebraic group (denoted $\BG$: see Definition ~\ref{BG}) or a finite degree approximation of it (denoted $\BG^{gm,m}$), both of which are {\it pointed}. However, while considering
a motivic Becker-Gottlieb transfer involving $\BG$ ($\BG^{gm,m}$), one needs to
consider $\Sigma_{\T}^{\infty}\BG_+$ ($\Sigma_{\T}^{\infty}\BG^{gm,m}_+$), which is the $\T$-suspension spectrum of  $\BG$ ($\BG^{gm,m}$, \res) provided with an extra base point $+$. 
\vskip .1cm
Observe that there is a natural map $\rmr:\BG_+ \ra \BG$ sending
$+$ to the base point of $\BG$. Let $\rma: \Sigma_{\T}^{\infty}\BG \ra \Sigma_{\T}^{\infty}\BG_+$ denote a map, so that $\Sigma_{\T}^{\infty}\rmr \circ \rma =id_{\Sigma_{\T}^{\infty}\BG}$. (Since the definition of such a map $\rma$ is straightforward, we skip the details.)
\begin{proposition}
	\label{split.0}
	Let $\rmh^{*, \bullet}$ denote a generalized motivic cohomology theory defined
	with respect to a motivic spectrum (with $p$ inverted, if $char({\it k}) =p>0$.) Let $\rmG$ denote a linear algebraic group, which is also {\it special in the sense of Grothendieck} (see: \cite{Ch}). Then,  with $\rmN(\rmT)$ denoting the normalizer of a split maximal torus in $\rmG$, one obtains the commutative square
	\be \begin{equation}
	\label{split.diag.0}
	\xymatrix{{\rmh^{*, \bullet}(\Sigma_{\T}^{\infty}\BN(T))  }\ar@<1ex>[r]^{\rmr^*} &{\rmh^{*, \bullet}(\Sigma_{\T}^{\infty}\BN(T)_+)}  \\
		{\rmh^{*, \bullet}(\Sigma_{\T}^{\infty}\BG)  }\ar@<1ex>[r]^{\rmr^*} \ar@<1ex>[u]^{\rmp^*} &{\rmh^{*, \bullet}(\Sigma_{\T}^{\infty}\BG_+)}  \ar@<1ex>[u]^{\rmp^*} ,}  
	\end{equation}\ee
	where the map $\rmp: \BN(T) \ra \BG$ is the map induced by the inclusion
	$\rmN(\rmT) \ra \rmG$. The right vertical map and the horizontal maps are all split monomorphisms. Therefore, the left vertical map is also a monomorphism.
		
\end{proposition}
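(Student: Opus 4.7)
The plan is to establish that the horizontal maps and the right vertical map in \eqref{split.diag.0} are split monomorphisms, and then to deduce injectivity of the left vertical map by a diagram chase. The splittings of the horizontal maps are essentially formal from the existence of $\rma$, while the splitting of the right vertical map is supplied by the motivic Becker--Gottlieb transfer of \cite{CJ20} together with the additivity theorem of \cite{JP20}.

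The horizontal maps come first. Since $\Sigma_{\T}^{\infty}\rmr \circ \rma = \mathrm{id}_{\Sigma_{\T}^{\infty}\BG}$, and the analogous construction supplies a corresponding $\rma$ for $\BN(\rmT)$ in place of $\BG$, passing to $\rmh^{*,\bullet}$ yields $\rma^* \circ \rmr^* = \mathrm{id}$ in both rows, so each horizontal $\rmr^*$ is a split monomorphism. For the right vertical map I would construct the motivic Becker--Gottlieb transfer $\tau: \Sigma_{\T}^{\infty}\BG_+ \to \Sigma_{\T}^{\infty}\BN(\rmT)_+$ associated to $\rmp: \BN(\rmT) \to \BG$. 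At stage $m$ of the admissible gadget, $\rmp$ is modeled by the projection $\rmU_m(\rmG)/\rmN(\rmT) \to \rmU_m(\rmG)/\rmG$, a smooth projective morphism with fiber $\rmG/\rmN(\rmT)$; the hypothesis that $\rmG$ is special provides the local triviality required for the stage-wise transfer construction of \cite{CJ20}, and the stage-wise transfers assemble to $\tau$. By the additivity theorem of \cite{JP20}, the composition $\Sigma_{\T}^{\infty}(\rmp_+) \circ \tau$ equals multiplication by the motivic Euler characteristic $\chi(\rmG/\rmN(\rmT))$ on $\Sigma_{\T}^{\infty}\BG_+$. The quotient $\rmG/\rmT \to \rmG/\rmN(\rmT)$ is a finite \'etale cover of degree $|W|$, where $W = \rmN(\rmT)/\rmT$ is the Weyl group, and the full flag variety $\rmG/\rmT$ has Euler characteristic $|W|$, so $\chi(\rmG/\rmN(\rmT)) = 1$. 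Consequently $\tau^*$ is a left inverse to the right vertical $\rmp^*$, and the latter is a split monomorphism.

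Finally, the diagram \eqref{split.diag.0} commutes because $\rmp \circ \rmr = \rmr \circ \rmp_+$ as maps of motivic spaces $\BN(\rmT)_+ \to \BG$. Hence if $x \in \rmh^{*,\bullet}(\Sigma_{\T}^{\infty}\BG)$ lies in the kernel of the left $\rmp^*$, applying the top $\rmr^*$ gives $\rmp^*(\rmr^*(x)) = \rmr^*(\rmp^*(x)) = 0$; the injectivity of the right $\rmp^*$ forces $\rmr^*(x) = 0$, and the injectivity of the bottom $\rmr^*$ then forces $x = 0$. The main technical obstacle is the construction of $\tau$ compatibly with the ind-scheme structure of $\BG$ and $\BN(\rmT)$ and the verification that the additivity theorem of \cite{JP20} applies uniformly at every finite stage; once this is in place, along with the motivic Euler characteristic identity $\chi(\rmG/\rmN(\rmT)) = 1$ (which in characteristic $p>0$ relies on the standing inversion of $p$), the rest of the argument is purely formal.
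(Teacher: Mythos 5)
Your overall strategy matches the paper's: use $\rma^*$ to split the horizontal maps, use the motivic Becker--Gottlieb transfer plus additivity to split the right vertical map, and deduce injectivity of the left vertical map by a diagram chase. The step where you establish that the right vertical map is a split monomorphism, however, has a serious gap.

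You claim that $\chi^{\mathbb{A}^1}(\rmG/\rmN(\rmT)) = 1$ by observing that $\rmG/\rmT \to \rmG/\rmN(\rmT)$ is finite \'etale of degree $|W|$ and that $\rmG/\rmT$ has Euler characteristic $|W|$, ``so $\chi(\rmG/\rmN(\rmT)) = 1$.'' This is a transplant of the topological argument, and it does not survive the passage to the motivic setting. The motivic Euler characteristic lives in the Grothendieck--Witt ring $\mathrm{GW}(k)$, not in $\mathbb{Z}$, and the flag variety does \emph{not} have $\chi^{\mathbb{A}^1} = |W|$ in $\mathrm{GW}(k)$: summing the compactly supported Euler classes of the Bruhat cells $\mathbb{A}^{\ell(w)}$ gives $\sum_{w} \langle (-1)^{\ell(w)}\rangle$, which equals $|W|\cdot\langle 1\rangle$ only when $-1$ is a square in $k$. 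Only the \emph{rank} of $\chi^{\mathbb{A}^1}(\rmG/\rmT)$ is $|W|$. Moreover, even granting a value for $\chi^{\mathbb{A}^1}(\rmG/\rmT)$, the relation ``$\chi$ of a degree-$d$ finite \'etale cover equals $d\cdot\chi$ of the base, hence divide by $d$'' requires justification in $\mathrm{GW}(k)$ and the division is not even well-defined there. In fact $\chi^{\mathbb{A}^1}(\rmG/\rmN(\rmT))$ is \emph{not} $1$ in general; \cite[Theorem 1.6]{JP20} shows it equals $1$ only under the hypothesis that $-1$ is a square in $k$.

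The paper's proof sidesteps this entirely: it does not need $\chi^{\mathbb{A}^1}(\rmG/\rmN(\rmT)) = 1$, only that it is a \emph{unit} in $\mathrm{GW}(k)$ (respectively $\mathrm{GW}(k)[p^{-1}]$). The argument is to compute the rank by base change to $\bar k$ (where the rank map is an isomorphism) to see it is $1$, and then to invoke \cite[Lemma 2.9(2)]{An} and \cite[Theorem 5.1(1)]{An}: when $k$ is not formally real, the kernel of the rank map is the nilradical so that a rank-one element is a unit, and when $k$ is formally real one reduces to $k = \mathbb{R}$ and checks that rank and signature equal $1$. You should replace your topological Euler characteristic computation with this unit argument (or at least with the rank computation followed by the citation to \cite{An}); without it, the proof that $\rmp^*$ is split injective does not go through except under the extra hypothesis that $-1$ is a square in $k$.
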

\begin{proof}
	That the right vertical map is a split monomorphism is a consequence of the
	motivic Becker-Gottlieb transfer as proved in \cite[9.2]{CJ20}, as well as \cite[Theorem 1.6]{JP20} and \cite[Theorem 5.1]{An}.
	Moreover, all of these depend on the key identification of the Grothendieck-Witt group with the motivic $\pi_0$ of the motivic sphere spectrum due to Morel: see
	 \cite{Mo4}, \cite{Mo12}. The restriction that the characteristic of the base field ${\it k}$ be different from $2$ is removed in 
	 \cite[Theorem 10.12]{BH}.
	 \vskip .1cm
	Let the motivic Euler characteristic of $\rmG/\rmN(T)$  be denoted
	$\chi^{{\mathbb A}^1}(\rmG/\rmN(T))$ henceforth. Now one may recall from \cite[Theorem 1.6]{JP20} that we showed 
	$\chi^{{\mathbb A}^1}(\rmG/\rmN(T))$ is $1$ in the Grothendieck-Witt group ${\rm GW}(Spec \,{\it k})$
	(${\rm GW}(Spec \,{\it k})[p^{-1}]$ if $char({\it k})=p>0$), provided
	${\it k}$ has a square root of $-1$. Hence this conclusion holds whenever the base field ${\it k}$ is algebraically or quadratically closed.
	In positive characteristics $p$, one may see that this already shows that $\chi^{{\mathbb A}^1}(\rmG/\rmN(T))$ is a {\it unit}
	in the group ${\rm GW}(Spec \, {\it k})[p^{-1}]$, 
	without the assumption on the existence of a square root of $-1$ in ${\it k}$. For this, one may first observe the commutative diagram, where
	$\bar {\it k}$ is an algebraic closure of ${\it k}$:
	\be \begin{equation}
	     \label{GW.pos.char}
	    \xymatrix{ { {\rm GW}(Spec \, \bar {\it k})[p^{-1}]} \ar@<1ex>[r]^(.6){rk}_(.6){\cong} & {{\mathbb Z}[p^{-1}]}\\
	               { {\rm GW}(Spec \, {\it k})[p^{-1}]} \ar@<1ex>[r]^(.6){rk} \ar@<1ex>[u] \ar@<1ex>[r] & {{\mathbb Z}[p^{-1}]} \ar@<1ex>[u]^{id}.}
	    \end{equation}\ee
Here the left vertical map is induced by the change of base fields from ${\it k}$ to $ \bar {\it k}$, and $rk$ denotes the {\it rank} map. Since the motivic Euler-characteristic
of $\rmG/\rmN(T)$ over $Spec \, {\it k}$ maps to the motivic Euler-characteristic of the corresponding $\rmG/\rmN(T)$ over $Spec \, \bar {\it k}$,
it follows that the rank of $\chi^{{\mathbb A}^1}(\rmG/\rmN(T))$ over $Spec \, {\it k}$ is in fact $1$. By \cite[Lemma 2.9(2)]{An}, this shows that the
 $\chi^{{\mathbb A}^1}(\rmG/\rmN(T))$ over $Spec \, {\it k}$ is in 
 fact a unit in ${\rm GW}(Spec \, {\it k})[p^{-1}]$, that is , when ${\it k}$ has positive characteristic. (For the convenience of the reader, we 
 will summarize a few key facts discussed in \cite[Proof of Lemma 2.9(2)]{An}. It is observed there that when the base field $k$ is {\it not} formally real,
 then ${\rm I}(k) = kernel({\rm GW}(k) {\overset {rk} \ra }{\mathbb Z})$ is the nil radical of ${\rm GW}(k)$: see \cite[Theorem V.8.9, Lemma V.7.7 and Theorem V. 7.8]{Bae}. Therefore,
 if $char(k)=p>0$, and the rank of $\chi^{{\mathbb A}^1}(\rmG/\rmN(T))$ is $1$ in ${{\mathbb Z}[p^{-1}]}$, then $\chi^{{\mathbb A}^1}(\rmG/\rmN(T))$ is $1+q$ for some nilpotent element $q$ in ${\rm I}(k)[p^{-1}]$ and 
 the conclusion follows.)
\vskip .1cm
In characteristic $0$, the commutative
diagram 
\be \begin{equation}
	     \label{GW.char.0}
	    \xymatrix{ { {\rm GW}(Spec \, \bar {\it k})} \ar@<1ex>[r]^(.6){rk}_(.6){\cong} & {{\mathbb Z}}\\
	               { {\rm GW}(Spec \, {\it k})} \ar@<1ex>[r]^(.6){rk} \ar@<1ex>[u] \ar@<1ex>[r] & {{\mathbb Z}} \ar@<1ex>[u]^{id}.}
	    \end{equation} \ee
shows that once again the rank of $\chi^{{\mathbb A}^1}(\rmG/\rmN(T))$ is $1$. Therefore,
 to show that the class $\chi^{{\mathbb A}^1}(\rmG/\rmN(T))$ is a unit in ${\rm GW}(Spec \, {\it k})$, it suffices to show its signature is 
 $1$: this is proven in \cite[Theorem 5.1(1)]{An}. (Again, for the convenience of the reader,
 we summarize some details from the proof of \cite[Theorem 5.1(1)]{An}. When the field $k$ is not formally real, the discussion in
 the last paragraph applies, so that by \cite[Lemma 2.12]{An} one reduces to considering only the case when $k$ is a real closed field. In this case, one lets ${\mathbb R}^{alg}$ denote 
 the real closure of ${\mathbb Q}$ in ${\mathbb R}$. Then, one knows the given real closed field $k$ contains a copy of ${\mathbb R}^{alg}$ and that there exists
 a reductive group scheme $\widetilde \rmG$ over  ${\rm Spec \, } {\mathbb R}^{alg}$ so that $\rmG = \widetilde \rmG \times_{{\rm Spec \, } {\mathbb R}^{alg}} {\rm Spec \, }k$. Let
 $\rm G_{\mathbb R} = \widetilde \rmG \times _{{\rm Spec \, } {\mathbb R}^{alg}} {\rm Spec \, }{\mathbb R}$.  Then 
 one also  observes 
 that the Grothendieck-Witt groups of the three fields $k$, ${\mathbb R}^{alg}$ and ${\mathbb R}$ are isomorphic, and the
 motivic Euler-characteristics $\chi^{{\mathbb A}^1}(\rmG/\rmN(T))$,  $\chi^{{\mathbb A}^1}({\widetilde \rmG}/ {\widetilde {\rm N(T) }})$ and
 $\chi^{{\mathbb A}^1}({\rmG}_{{\rm Spec \, }{\mathbb R}} / { \rmN(T) }_{{\rm Spec \, }{\mathbb R}})$
 over the above three fields identify under the above isomorphisms, so that one may assume 
 the base field $k$ is ${\mathbb R}$. Then it is shown in \cite[Proof of Theorem 5.1(1)]{An} that, in this case,  knowing the rank and signature of the
 motivic Euler characteristic $\chi^{{\mathbb A}^1}(\rmG/\rmN(T))$ are $1$ suffices to 
 prove it is a unit in the Grothendieck-Witt group.)
 \vskip .1cm
 
 These complete the proof that the right vertical map in ~\eqref{split.diag.0} is a split monomorphism.
\vskip .2cm	
	The horizontal maps in ~\eqref{split.diag.0} are split by the map $a^*$. Since the diagram commutes, it follows
	that the left vertical map in ~\eqref{split.diag.0} is also a monomorphism. This proves the Proposition. 
\end{proof}
\begin{remark}
 There is an extension of the above theorem for linear algebraic groups that are not special, as discussed in \cite[Theorem 1.5]{JP20}
 and \cite[Theorem 1.5(1)]{CJ20}. But then we require the field be infinite to prevent certain situations like those mentioned in
 \cite[4.2, Example 2.10]{MV} from occurring. However, for the applications in this paper, we only need to consider the linear
 algebraic groups $\{{\rm GL}_n|n\}$, which are all special.
\end{remark}
\subsection{}
For the rest of the discussion, we will restrict to the family of groups $\{ \GL _n|n\}$. 
Then the main result of this section is the proof of Theorem ~\ref{mainthm.1}.
We will break the proof into several propositions. Given two motivic spaces $\X, \Y$, we will let $[\X, \Y]$ denote the $Hom$ in the unstable pointed motivic homotopy category, with $p$ inverted, if $p>0$ is the characteristic of the base field $k$. For a 
motivic space $\rmP$, we will let $\rmQ (\rmP) = \colimn \Omega_{\T}^n (\T^{\wedge n}(\rmP))$.
\vskip .1cm
Let 
\be \begin{equation}
\label{p}
\rmp:\BN_{\rm GL_\infty} (\rmT) = \colim_n \BN_{\rm GL_n} (T_n) \ra \colim_n \BGL _{n} = \BGL _{\infty}
\end{equation} \ee
denote the map induced by the inclusion of $\rmN_{\rm GL_n}(T_n)$ in ${\rm GL}_n$.
\begin{proposition}
  \label{key.1}
	\begin{enumerate}[\rm(i)]
    \item Assume the base field ${\it k}$ is of characteristic $0$. 
	Then the map 
	\[\bar q= q \circ \rmQ(\rmp): \rmQ ( \BN_{\rm GL_{\infty}}  (\rmT ) )= \rmQ (\colim_n \BN_{\rm GL_n} (T_n) ) \ra \rmQ (\colim_n \BGL _{n}) = \rmQ (\BGL _{\infty}) {\overset q \ra } \BGL _{\infty}\]
induces a surjection for every pointed motivic space $\rmX$ which is a compact object in the unstable pointed motivic homotopy category:
	\[ [ \rmX, \colim_n\rmQ (\BN_{\rm GL_n} (T_n))] \ra [\rmX, \BGL _{\infty}].\]
    \item Assume the base field ${\it k}$ is perfect and of positive characteristic $p$. Then the 
    same conclusion holds after inverting the prime $p$.
       \end{enumerate}
\end{proposition}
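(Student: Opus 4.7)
The plan is to factor $\bar q = q \circ \rmQ(\rmp)$ and to show separately that $q_*$ and $\rmQ(\rmp)_*$ induce surjections on $[\rmX,-]$. The identification we rely on throughout is that, for a compact pointed motivic space $\rmX$ and any motivic space $\rmY$,
\[
[\rmX, \rmQ(\rmY)] \;=\; \colimn [\rmX,\, \Omega_{\T}^n \T^n \rmY] \;=\; \colimn [\T^n \rmX,\, \T^n \rmY] \;=\; [\Sigma_{\T}^{\infty}\rmX,\, \Sigma_{\T}^{\infty}\rmY]_{\SH},
\]
and, similarly, compactness of $\rmX$ lets us commute $[\rmX, \rmQ(-)]$ with the filtered colimits defining $\BGL_{\infty} = \colimn \BGL_n$ and $\BN_{\GL_\infty}(\rmT) = \colimn \BN_{\GL_n}(T_n)$.

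For the surjectivity of $q_*$, we use Proposition~\ref{BGL.Omega}, which realizes $\BGL_{\infty}$ as the $0$-th space of an $\Omega_{\T}$-spectrum. The canonical inclusion at $n=0$
\[
\sigma\colon \BGL_{\infty} \;=\; \Omega_{\T}^{0}\T^{0}\BGL_{\infty} \;\longrightarrow\; \colimn \Omega_{\T}^{n}\T^{n}\BGL_{\infty} \;=\; \rmQ(\BGL_{\infty})
\]
satisfies $q\circ\sigma = \mathrm{id}_{\BGL_{\infty}}$, since at $n=0$ the iterated structure map is the identity. Thus $q$ admits a section, and $q_*\colon [\rmX,\rmQ(\BGL_{\infty})] \to [\rmX,\BGL_{\infty}]$ is surjective for every motivic space $\rmX$.

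For the surjectivity of $\rmQ(\rmp)_*$, apply Proposition~\ref{split.0} with $\rmG = \GL_n$ (which is special in the sense of Grothendieck). The right vertical arrow of \eqref{split.diag.0} being a split monomorphism in cohomology translates, by Yoneda, to a stable section $s_{n,+}\colon \Sigma_{\T}^{\infty}(\BGL_n)_+ \to \Sigma_{\T}^{\infty}\BN_{\GL_n}(T_n)_+$ of $\Sigma_{\T}^{\infty}\rmp_{n,+}$. Using the horizontal splittings $\rma,\rmr$ from subsection~\ref{change.base.pts} together with the commutativity of \eqref{split.diag.0}, the composite
\[
s_n \;:=\; \Sigma_{\T}^{\infty}\rmr_{\BN_{\GL_n}(T_n)} \circ s_{n,+} \circ \rma_{\BGL_n}
\]
is a stable section of $\Sigma_{\T}^{\infty}\rmp_n$, so $(\Sigma_{\T}^{\infty}\rmp_n)_*$ is surjective on $[\Sigma_{\T}^{\infty}\rmX,-]_{\SH}$. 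Via the identification above, $\rmQ(\rmp_n)_*\colon [\rmX,\rmQ(\BN_{\GL_n}(T_n))] \to [\rmX,\rmQ(\BGL_n)]$ is surjective, and passing to the colimit in $n$ (using compactness of $\rmX$) yields the desired surjectivity of $\rmQ(\rmp)_*$. Composing with the surjection $q_*$ gives (i). In positive characteristic $p$, Proposition~\ref{split.0} is available only in $\SH[p^{-1}]$, and the entire argument then takes place in the $p$-inverted motivic homotopy category, giving (ii).

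The main obstacle is bookkeeping rather than geometry: one must verify that a stable section of $\Sigma_{\T}^{\infty}\rmp_{n,+}$ restricts to a stable section of the unpointed version $\Sigma_{\T}^{\infty}\rmp_n$ — which is precisely what the horizontal splittings $\rma$, $\rmr$ in \eqref{split.diag.0} accomplish — and that the compactness hypothesis on $\rmX$ is strong enough to commute $[\rmX,-]$ both with the $\Omega_{\T}^n\T^n$ colimit defining $\rmQ$ and with the colimit over $n$ defining $\BGL_{\infty}$. No further geometric input beyond Propositions~\ref{BGL.Omega} and~\ref{split.0} is needed.
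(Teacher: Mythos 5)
Your proof is correct in its conclusion, but it takes a genuinely different route from the paper's, and one step needs to be stated more carefully. The paper follows Becker's method: it forms the mapping cone of $\phi\colon\Sigma_{\T}^{\infty}\BN_{\rm GL_\infty}(\rmT)\to\tilde{\mathbf K}$, uses the resulting long exact sequence to convert surjectivity into the vanishing of an obstruction class $c(u_n)$, and kills that class by combining the monomorphism part of Proposition~\ref{split.0} with a tautological factorization of $u_n\circ\rmp_n$ through $\bar q$. You instead factor $\bar q=q\circ\rmQ(\rmp)$ and produce explicit right inverses: the level-$0$ inclusion $\sigma$ splits $q$ (this is fine, and is the same observation the paper makes in its commutative square), and at each finite stage you produce a stable section $s_n$ of $\Sigma_{\T}^{\infty}\rmp_n$ from the transfer, after adjusting base points with $\rma,\rmr$. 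Both arguments rest on the same three inputs — compactness of $\rmX$, Proposition~\ref{split.0}, and the $\Omega_{\T}$-infinite-loop structure on $\BGL_\infty$ — but yours is shorter and avoids the cofiber/element chase entirely, at the price of using slightly more than what the \emph{statement} of Proposition~\ref{split.0} provides (see below). As you note, and as the paper's Remark~2 warns, the sections $s_n$ need not be compatible as $n$ varies (the fibers $\GL_n/\rmN_{\GL_n}(\rmT_n)$ change), so one only gets surjectivity of $\rmQ(\rmp)_*$ in the colimit, not splitness — which matches the proposition's claim.

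The one point to tighten is the phrase ``being a split monomorphism in cohomology translates, by Yoneda, to a stable section.'' That implication is false in general: $\rmp^*$ being a split monomorphism on $\rmh^{*,\bullet}(-)$ for every representable $\rmh$ does \emph{not} by itself yield a section of $\Sigma_{\T}^{\infty}\rmp_{n,+}$ in $\SH$, because the splittings could a priori fail to be natural in $\rmh$. What saves you is that the proof of Proposition~\ref{split.0} exhibits a \emph{natural} splitting, namely the Becker--Gottlieb transfer $tr_n\colon\Sigma_{\T}^{\infty}(\BGL_n)_+\to\Sigma_{\T}^{\infty}\BN_{\GL_n}(\rmT_n)_+$, for which $\Sigma_{\T}^{\infty}\rmp_{n,+}\circ tr_n$ is multiplication by the unit $\chi^{\mathbb A^1}(\GL_n/\rmN_{\GL_n}(\rmT_n))\in{\rm GW}(k)$ (or ${\rm GW}(k)[p^{-1}]$). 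So you should cite the transfer and its Euler-characteristic normalization directly, rather than invoking Yoneda on the cohomological splitness; with that wording fixed, the rest of your bookkeeping — the unpointed section $s_n=\Sigma_{\T}^{\infty}\rmr\circ s_{n,+}\circ\rma$, the identification $[\rmX,\rmQ(\rmY)]\cong[\Sigma_{\T}^{\infty}\rmX,\Sigma_{\T}^{\infty}\rmY]_{\SH}$, and the passage to the colimit over $n$ — is sound.
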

\begin{proof} We will follow \cite[\S 4]{Beck} in this proof. The proof of the second  statement follows along the same lines as the proof of the first statement. Therefore,  we will discuss a proof of only the first statement. 
	Clearly the map $\bar \rmq$ provides a  map of the
	corresponding spectra:
\be \begin{equation}
  \label{compos.map}
	 \Sigma_{\T}^{\infty} (\BN_{\rm GL_{\infty}} (\rmT)) =  \Sigma_{\T}^{\infty} (\colimn \BN_{\rm GL_n} (T_n)) \ra \tilde{\mathbf K},
\end{equation} \ee
     where $\tilde {\mathbf K}$ is the motivic $\Omega_{\T}$-spectrum whose $0$-th space is $\BGL_{\infty}$. Let $\phi$ denote the above map
     in ~\eqref{compos.map}.
     
Let $h$ denote the motivic cohomology theory defined by the mapping cone of the above map $\phi$. Then, for every motivic space $\rmX$, we obtain a long-exact sequence:
\be \begin{equation}
\label{long.exact.1}
 \cdots \ra [\rmX, \rmQ (\BN_{\rm GL_{\infty}}  (\rmT ) ) ] {\overset {\bar q_* }\ra } [\rmX, \BGL _{\infty}] {\overset c \ra}  h^{0,0}(\rmX) \ra \cdots
\end{equation} \ee
For each $n \ge 0$, let
\be \begin{equation}
\label{u}
 u_n \eps [\BGL_{n}, \BGL_{\infty}]
 \end{equation} \ee
\vskip .1cm \noindent
be the class of the map induced by the imbedding $\GL_n \ra \GL_{\infty}$. Then it suffices to show that each such $u_n$ is in the image of the induced map $\bar q_*$, which is equivalent to showing that $c(u_n) =0$. (To see this let $v_n: \BGL_{n} \ra  \rmQ (\BN_{\rm GL_{\infty}} (\rmT ) )$ be such that $\bar q_*([v_n]) = [\bar q \circ v_n] = {\it u}_n$.
Here, if $\alpha$ is a map,  $[\alpha]$ denotes the stable homotopy class of  $\alpha$. Since $\rmX$ is assumed to be compact,
$[\rmX, BGL_{\infty}]= \colimn [\rmX, BGL_n]$. Therefore, giving an $\alpha \eps [\rmX, \BGL_{\infty}]$ is equivalent to giving 
 an $\alpha_n: \rmX \ra \BGL_{n}$ (for some $n$), so that $\alpha = u_n \circ \alpha _n$. Now let $\beta_n = v_n \circ \alpha_n$. 
 Then $\bar q_*([\beta_n ]) = [\bar q\circ v_n \circ \alpha_n] = [{\it u}_n \circ\alpha_n] = [\alpha]$.)
\vskip .2cm
Now we observe the commutative diagram, where ${\rm p}_n: {\rm BN_{\rm GL_n} (T_n)} \ra {\rm BGL_n}$ is the obvious map induced by the
imbedding ${\rm N_{\rm GL_n} (T_n)} \ra {\rm GL_n}$:
 \be \begin{equation}
 \label{comm.diagm}
\xymatrix{{[\BGL _{n}, \rmQ (\BN_{\rm GL_{\infty}}  (\rmT ))]} \ar@<1ex>[r]^(.6){\bar q_*}  \ar@<1ex>[d]^{\rmp_n^*} & {\tilde \rmK^{0,0}(\BGL _{n})} \ar@<1ex>[r]^c \ar@<1ex>[d]^{\rmp_n^*} & {{\rm h}^{0, 0}(\BGL _{n})} \ar@<1ex>[d]^{\rmp_n^*}\\
          {[\BN _{\rm GL_n} (\rmT_n ), \rmQ (\BN_{\rm GL_{\infty}}  (\rmT ) )]} \ar@<1ex>[r]^(.6){\bar q_*} & {\tilde \rmK^{0,0}(\BN_{\rm GL_n}  (\rmT_n ) )} \ar@<1ex>[r]^c & {{\rm h}^{0, 0}(\BN _{\rm GL_n} (\rmT_n ) ) }}
 \end{equation} \ee
 \vskip .2cm
Recall that  $\rmp_n^*$ is a monomorphism, by Proposition ~\ref{split.0}. Therefore, now it suffices to prove $\rmp_n^*(c({\it u}_n))=0$, for each $n$. But the commutativity of the above diagram, shows that this is equivalent to showing that $c(\rmp_n^*({\it u}_n)) =0$. 
At this point, we observe the commutative diagram, where ${\it i_n}$ and $j$ are the obvious maps:
\[ \xymatrix{{\rm BN_{\rm GL_n} (T_n)} \ar@<1ex>[r]^{\rm p_n} \ar@<1ex>[d]^{ i_n} &{\rm BGL_n} \ar@<1ex>[d]^{u_n}\\
             {\rm BN_{\rm GL_{\infty}} (T)} \ar@<1ex>[r]^{\rm p} \ar@<1ex>[d]^{j} & {\rm BGL_{\infty}} \ar@<1ex>[d] \ar@<1ex>[dr]^{id}\\
             {\rm Q(BN_{\rm GL_{\infty}} (T))} \ar@<1ex>[r]^{\rm Q(p)} & {\rm Q(BGL_{\infty})} \ar@<1ex>[r]^q & {\rm BGL_{\infty}}.}
\]
The map $\BGL_{\infty} \ra \rmQ(\BGL_{\infty})$ is provided by taking the colimit of the maps $\BGL_{\infty} \ra \Omega_{\T}^n\Sigma_{\T}^n(\BGL_{\infty})$,
 while the map $\rmQ(\BGL_{\infty}) \ra \BGL_{\infty}$ is provided by the fact $\BGL_{\infty}$ is an infinite $\T$-loop space.
 The fact that the composite map $\BGL_{\infty} \ra \rmQ(\BGL_{\infty}) \ra \BGL_{\infty}$ is homotopic to the identity map follows
 from the adjunction between $\wedge \T$ and $\Omega_{\T}$, along with the $\Omega_{\T}$-infinite loop space structure on $\BGL_{\infty}$.
In view of the commutative diagram above,  $\rmp_n^*({\it u}_n) = {\it u_n}  \circ {\rm p_n} = q \circ {\rm Q(p)} \circ {\it j} \circ {\it i_n} = {\bar q}_*({\it j} \circ {\it i_n})$.
Since the two  rows in the diagram ~\eqref{comm.diagm} are exact, it follows that indeed $c(\bar q_*({\it j} \circ i_n)) =0$, thereby 
completing the proof that $c({\rm p}_n^*({\it u_n}))=0$.
\end{proof}
\vskip .2cm
 Let $\lambda: \rmQ (B{\mathbb G}_m) \ra \rmQ (\BGL _{\infty}) \ra \BGL _{\infty}$ denote the map considered in ~\eqref{splitting.map}. 
\begin{proposition}
	\label{key.comm.triangle.0} 
	\begin{enumerate}[\rm(i)]
	\item Assume the base field ${\it k}$ is of characteristic $0$. 
	Let $\rmX$ denote any pointed motivic space.
	Then, there exists a map $\zeta: \rmQ (\BN_{\rm GL_{\infty}}  (\rmT ) ) \ra \rmQ (\rmB{\mathbb G}_m) = \rmQ (\P^{\infty} )$, so that the triangle
\be	\begin{equation}
	\xymatrix{ {[\rmX, \rmQ ( \BN_{\rm GL_{\infty}} (\rmT ) )]} \ar@<1ex>[r]^{\zeta_*} \ar@<1ex>[dr]_{\bar q_*} & {[ \rmX, \rmQ (B{\mathbb G}_m)]} \ar@<1ex>[d]^{\lambda_*} \\
		& {[\rmX, \BGL _{\infty} ]}}
	\end{equation}	\ee
	commutes.
	\item Assume the base field ${\it k}$ is perfect and of positive characteristic $p$. Then the 
    same conclusion holds after inverting the prime $p$.
    \end{enumerate}
\end{proposition}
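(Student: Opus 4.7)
The plan is to construct $\zeta$ as a colimit of level-wise maps $\zeta_n$, each obtained by composing a finite \'etale transfer with a ``sum-of-characters'' infinite $\T$-loop map, and then to identify $\lambda \circ \zeta$ with $\bar q$ using the compatibility between the transfer defined in~\eqref{rel.transf} and the K-theoretic Gysin map for finite \'etale covers. The argument for~(ii) is identical once the construction is carried out in the $p$-inverted unstable pointed motivic homotopy category, so I focus on~(i).

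\textbf{Construction of $\zeta$.} For each $n \ge 1$, the inclusion $T_n \hookrightarrow \rmN_{\rm GL_n}(T_n)$ induces a finite \'etale Galois cover $c_n : \B T_n \to \BN_{\rm GL_n}(T_n)$ of degree $n! = |W_n|$, where $W_n = S_n$ is the Weyl group. Let $\tau_n : \rmQ(\BN_{\rm GL_n}(T_n)) \to \rmQ(\B T_n)$ denote the map of infinite $\T$-loop spaces obtained from the motivic transfer of $c_n$ (as in~\eqref{rel.transf}) by passing to $0$-th spaces of the $\T$-suspension spectra. Let $\sigma_n : \rmQ(\B T_n) = \rmQ((\B {\mathbb G}_m)^n) \to \rmQ(\B {\mathbb G}_m)$ be the map extending, via the universal property of $\rmQ$, the map $\B T_n \to \rmQ(\B {\mathbb G}_m)$ given by the sum in the infinite $\T$-loop structure on $\rmQ(\B {\mathbb G}_m)$ of the $n$ canonical projections $\pi_i : \B T_n \to \B {\mathbb G}_m$. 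Set $\zeta_n = \sigma_n \circ \tau_n$. Under the block-diagonal stabilizations $T_n \hookrightarrow T_{n+1}$ and $\rmN_{\rm GL_n}(T_n) \hookrightarrow \rmN_{\rm GL_{n+1}}(T_{n+1})$, the $(n+1)$-st projection restricts to the constant base-pointed map on $\B T_n$ and contributes trivially to $\sigma_{n+1}$, while the transfer $\tau_n$ is compatible with stabilization by the naturality of~\eqref{rel.transf}. The $\zeta_n$ therefore assemble into a map $\zeta : \rmQ(\BN_{\rm GL_\infty}(\rmT)) \to \rmQ(\B {\mathbb G}_m)$.

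\textbf{Commutativity of the triangle.} It suffices to verify $\lambda_* \circ \zeta_* = \bar q_*$ on each $[\rmX, \BN_{\rm GL_n}(T_n)]$. A representative $f : \rmX \to \BN_{\rm GL_n}(T_n)$ classifies a principal $\rmN_{\rm GL_n}(T_n)$-bundle whose associated rank-$n$ vector bundle $V_f$ satisfies $\bar q_*(f) = [V_f] - n \in \tilde \rmK^0(\rmX) \cong [\rmX, \BGL_\infty]$. Pulling back along $c_n$ splits $V_f$ as $L_1 \oplus \cdots \oplus L_n$, with $L_i$ the line bundle corresponding to the $i$-th tautological character of $T_n$, so $\lambda \circ \sigma_n$ applied to this pullback represents $\sum_{i=1}^n ([L_i] - 1)$. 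By Corollary~\ref{tr.finite.etale.maps}, the effect of $\tau_n$ on algebraic K-theory coincides with the K-theoretic Gysin (direct image) map for the finite \'etale cover $c_n$. Combined with the projection formula for this Gysin map, this yields $\lambda_* \circ \zeta_*(f) = [V_f] - n = \bar q_*(f)$, as required.

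\textbf{Main obstacle.} The crux of the argument lies in the commutativity step. In the classical topological setting of Becker's work, the analogous identification is essentially immediate from the multiplicative structure on $\BU$ together with the splitting principle; in the motivic world one must invoke the compatibility of the transfer defined in~\eqref{rel.transf} with the K-theoretic Gysin map for finite \'etale morphisms. This compatibility is the content of Theorem~\ref{compat.Gysin} and its Corollary~\ref{tr.finite.etale.maps}, developed in Section~3, which form the technical heart on which the present proposition and the complementary Proposition~\ref{key.diagms} both depend.
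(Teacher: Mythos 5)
Your construction differs from the paper's in a crucial way: you use the Weyl cover $c_n : \B T_n \to \BN_{\rm GL_n}(T_n)$, which has degree $n! = |\Sigma_n|$, and then the \emph{sum} of all $n$ tautological characters. This leads to a genuine gap in the final commutativity step. You want $c_{n*}\bigl(\sum_i ([L_i] - 1)\bigr) = [V_f] - n$, and since $\sum_i ([L_i] - 1) = c_n^*([V_f] - n)$, the projection formula gives only
$c_{n*}\bigl(c_n^*([V_f] - n)\bigr) = ([V_f] - n) \cdot c_{n*}(1)$.
Here $c_{n*}(1)$ is the class of the rank-$n!$ bundle $c_{n*}\mathcal{O}$, whose fiber is the regular representation $k[\Sigma_n]$ of the Weyl group; this is \emph{not} the unit of $K^0$. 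Already at $n = 2$ one computes $c_{2*}([L_1] + [L_2] - 2) = 2[W] - 2[\mathbf{1} \oplus \mathrm{sgn}]$, which is not $[W] - 2$ (the two classes differ by $[W] - 2[\mathrm{sgn}]$, a nonzero rank-zero class). Nothing in the argument forces $c_{n*}(1) = 1$, and in fact it fails.

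The paper sidesteps the projection formula entirely. It uses the cover $r_n : \St_{2n,n}/(\mathbb{G}_m \times N_{\GL_{n-1}}(T_{n-1})) \to \St_{2n,n}/N_{\GL_n}(T_n)$, which has degree $n$ (not $n!$), together with a \emph{single} character: the line bundle $\beta$ coming from the one-dimensional ``first-coordinate'' representation of $\mathbb{G}_m \times N_{\GL_{n-1}}(T_{n-1})$. The decisive representation-theoretic fact, spelled out in the proof of Proposition~\ref{key.diagms}, is that the standard representation $W$ of $\GL_n$, restricted to $N_{\GL_n}(T_n)$, is \emph{induced} from this one-dimensional representation of the index-$n$ subgroup; consequently $r_{n*}(\beta) = \alpha$ holds directly, by identifying pushforward along a finite \'etale cover of classifying spaces with induction of representations, with no need for the projection formula. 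Your identification of Corollary~\ref{tr.finite.etale.maps} as the technical heart of the argument is correct, but the particular cover and bundle you chose do not satisfy the required identity; the paper's choice of an index-$n$ subgroup is exactly what makes the pushforward come out to be $\alpha$ on the nose.

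A secondary issue: your claim that the $\zeta_n$ assemble compatibly as $n$ grows is also not justified. The base change of $c_{n+1}$ along $\BN_{\GL_n}(T_n) \to \BN_{\GL_{n+1}}(T_{n+1})$ is a disjoint union of $n+1$ copies of $\B T_n$ rather than a single copy, so the stated compatibility ``the $(n+1)$-st projection restricts to the constant map'' does not by itself establish the claimed commutation. The paper devotes the diagrams~\eqref{Pn}--\eqref{trivial.map} to precisely this stabilization issue for its own cover, and an analogous (but different, and more elaborate) analysis would be needed for yours; given the first gap, though, this is moot.
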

\vskip .2cm \noindent
{\bf Proof of Theorem ~\ref{mainthm.1}}. 
Before we prove the above proposition, we proceed to show how to complete the proof of Theorem ~\ref{mainthm.1}, given the above Proposition.
We simply observe that, for a compact object $\rmX$, the composition of the maps in
\[  [\rmX, \rmQ (\BN_{\rm GL_{\infty}} (\rmT ) )] {\overset {\zeta_*} \ra} [ \rmX, \rmQ (\rmB {\mathbb G}_m))] {\overset {\lambda_*} \ra} [ \rmX, \BGL _{\infty} ] \]
is a surjection, thereby proving that the map $\lambda_*$ is also surjection, which completes
the proof of the theorem.
\vskip .2cm 
\subsection{}
Therefore, it suffices to prove Proposition ~\ref{key.comm.triangle.0}, which we proceed to do presently. 
Moreover, we will only consider the characteristic $0$ case explicitly, as the positive characteristic case follows
exactly along the same lines, once the characteristic is inverted.
Let $\St_{2n,n}$ denote the
{\it Stiefel variety} of $n$-frames (that is, $n$-linearly independent vectors) $({\mathbf v}_1, \cdots, {\mathbf v}_n)$ in ${\mathbb A}^{2n}$. The group $\GL_n$ acts on this variety, by acting on such frames by sending $({\mathbf v}_1, \cdots, {\mathbf v}_n)$ to $({\mathbf v}_1, \cdots, {\mathbf v}_n)*g$, $g \eps \GL_n$. 
(We view this as a right action because the Stiefel variety $\St_{2n,n}$ identifies with the 
variety of all $2n\times n$-matrices of rank $n$, with each vector ${\mathbf v}_i$ in an $n$-frame $({\mathbf v}_1, \cdots, {\mathbf v}_n)$ written
as the $i$-th column.)
This is a free-action and the quotient is
the {\it Grassmannian}, ${\rm Grass}_{2n,n}$. The Stiefel variety $\St_{2n,n}$ is an open sub-variety of the affine space ${\mathbb A}^{2n^2}$ and the complement has codimension $n+1$ in ${\mathbb A}^{2n^2}$.
\vskip .1cm
Next we consider the ind-scheme:
\be \begin{equation}
 \label{A.infty}
 {\mathbb A}^2 {\overset {} \ra} {\mathbb A}^4 {\overset {} \ra}{\mathbb A}^6 \cdots {\overset {} \ra} {\mathbb A}^{2n} {\overset {i_n} \ra} {\mathbb A}^{2n+2} \cdots,
\end{equation} \ee
where the closed immersion ${\mathbb A}^{2n} \ra {\mathbb A}^{2n+2}$  sends $(x_1, \cdots, x_{2n})$ to $(x_1, \cdots, x_{2n}, 0, 0)$.  Let ${\mathbf e}_i$, $i=1, \cdots, 2n, 2n+1, 2n+2$
denote the standard basis vectors in ${\mathbb A}^{2n+2}$. Then we obtain a closed immersion
\be \begin{equation}
 \label{closed.imm.St}
  i_n:\St_{2n,n} \ra \St_{2n+2, n+1},
\end{equation} \ee
by sending an $n$-frame $({\mathbf v}_1, \cdots, {\mathbf v}_n)$ in ${\mathbb A}^{2n}$ to the $n+1$-frame $({\mathbf v}_1, \cdots, {\mathbf v}_n, \bar {\mathbf e}_{n+1}= {\mathbf e}_{2n+1}+{\mathbf e}_{2n+2})$.
 This induces
a closed immersion of the Grassmannians, ${\rm Grass}_{2n,n} \ra {\rm Grass}_{2n+2, n+1}$.
Therefore, the discussion in section ~\ref{geom.class.sp} shows that the above Stiefel varieties may be used as finite dimensional approximations to 
$\EGL_n$ and the Grassmannian, ${\rm Grass}_{2n,n}$, could be used as a finite dimensional approximation to 
$\BGL_n$.
\vskip .2cm
Next we make the following identifications.

\be \begin{align}
  \label{key.idents}
      \BGL_n^{gm,n} &= \St _{2n,n}/\GL _{n},\\
      \BN_{\GL_n}(T_n) ^{gm,n} &= \St _{2n,n}/{\rm N_{\GL_n}(T_n)}, \mbox{ and } \notag\\
      {\widetilde {\BN_{\GL_n}(T_n) ^{gm, n}}} &= \St _{2n,n}/({\mathbb G}_m \times  {\rm N_{\GL_{n-1}}(T_{n-1})}), \notag
\end{align} \ee
where we imbed $\GL_{n-1}$ in $\GL_n$ as the last  $(n-1)\times (n-1)$-block, then
imbed $\rmN_{\GL_n}(\rmT_n)$ in $\GL_n$, and imbed ${\mathbb G}_m \times \rmN_{\GL_{n-1} (\rmT_{n-1})}$ in ${\mathbb G}_m \times \GL_{n-1} $. Now $ {\mathbb G}_m \times \rmN_{\GL_{n-1}} (\rmT_{n-1}) $ is a subgroup of index $n$ in $\rmN_{\GL_n} (\rmT_n)$, so that the
projection $r_n:{\widetilde {\BN_{\GL_n} (\rmT_n)^{gm,n}}}  \ra \BN_{\GL_n} (\rmT_n)^{gm,n}$ is a finite \'etale cover of degree $n$. The fact that the terms appearing on the right-hand-sides  in ~\eqref{key.idents} are indeed approximations to
the classifying spaces of the corresponding linear algebraic groups, follows from the discussion in section ~\ref{geom.class.sp}. 
\vskip .1cm
Next consider the map $\St_{2n,n} \ra \St_{2n, 1}$ sending an $n$-frame $({\mathbf v_1}, {\mathbf v}_{2}, \cdots, {\mathbf v}_n)$ to ${\mathbf v}_1$.
Clearly this factors through the quotient of $\St_{2n,n}/1 \times \GL_{n-1}$, where $\GL_{n-1}$ acts only on the last $n-1$-vectors
in the $n$-frame $({\mathbf v}_1, \cdots, {\mathbf v}_{n-1}, {\mathbf v}_n)$. Therefore, we obtain the map 
\be \begin{equation}
  \label{phi.n}
  \phi_n: \St_{2n,n}/(1 \times \GL_{n-1}) \ra \St_{2n,1}.
    \end{equation} \ee
Moreover, the above map $\phi_n$ is compatible with the obvious action of ${\mathbb G}_m$ on $\St_{2n,n}/\GL_{n-1}$ where it acts on
the vector ${\mathbf v}_1$ in an $n$-frame, $({\mathbf v}_1, \cdots, {\mathbf v}_{n-1}, {\mathbf v}_n)$, and it acts
on the $1$-frame ${\mathbf v}$ in $\St_{2n,1}$. Taking quotients, this defines the map
\be \begin{equation}
  \label{bphi.n}
  \bar \phi_n: \St_{2n,n}/({\mathbb G}_m \times \GL_{n-1}) \ra \St_{2n,1}/{\mathbb G}_m.
    \end{equation} \ee
One may then observe the commutative square:
\be \begin{equation}
 \label{compatible.phi}
\xymatrix{{\St_{2n,n}/({\mathbb G}_m \times \GL_{n-1})} \ar@<1ex>[r]^(.6){\bar \phi_n} \ar@<1ex>[d] & {\St_{2n,1}/{\mathbb G}_m} \ar@<1ex>[d]\\
           {\St_{2n+2,n+1}/({\mathbb G}_m \times \GL_{n})} \ar@<1ex>[r]^(.6){\bar \phi_{n+1}}  & {\St_{2n+2,1}/{\mathbb G}_m},}
\end{equation} \ee
where the left vertical map is the closed immersion defined by $i_n:\St_{2n,n} \ra \St_{2n+2, n+1}$ and the 
right vertical map is induced by the closed immersion $\St_{2n,1} \ra \St_{2n+2, 1}$.

 One may also observe that clearly $\St_{2n,1}/{\mathbb G}_m$ is an approximation to the classifying space of ${\mathbb G}_m$,
 so that we will let
\be \begin{equation}
   \label{BGm.approx.n}
   \B{\mathbb G}_m^{gm, n} = \St_{2n,1}/{\mathbb G}_m.
\end{equation} \ee
We also let 
\be \begin{align}
\label{bar.p.n}
  u_n: {\widetilde {\BN_{\GL_n}(\rmT_{n})^{gm,n}}}= \St_{2n,n}/({\mathbb G}_m \times N_{\GL_{n-1}}(\rmT_{n-1})) &\ra \St_{2n,n}/({\mathbb G}_m \times \GL_{n-1}), \mbox{ and}\\
\bar u_n =\bar \phi_n \circ u_n: {\widetilde {\BN_{\GL_n}(\rmT_{n})^{gm,n}}}= \St_{2n,n}/({\mathbb G}_m \times N_{\GL_{n-1}}(\rmT_{n-1})) &\ra \St_{2n,1}/{\mathbb G}_m = \B{\mathbb G}_m^{gm,n}. \notag 
 \end{align} \ee
\vskip .1cm
Then we also obtain the commutative diagram:
\be \begin{equation}
 \label{compatible.p}
\xymatrix{{\St_{2n,n}/({\mathbb G}_m \times N_{\GL_{n-1}}(\rmT_{n-1}))} \ar@<1ex>[r]^(.55){u_n}\ar@<1ex>[d] & {\St_{2n,n}/({\mathbb G}_m \times \GL_{n-1})} \ar@<1ex>[r]^(.6){\bar \phi_n} \ar@<1ex>[d] & {\St_{2n,1}/{\mathbb G}_m} \ar@<1ex>[d]\\
           {\St_{2n+2,n+1}/({\mathbb G}_m \times N_{\GL_{n}}(\rmT_n))} \ar@<1ex>[r]^(.55){u_{n+1}}& {\St_{2n+2,n+1}/({\mathbb G}_m \times \GL_{n})} \ar@<1ex>[r]^(.6){\bar \phi_{n+1}}  & {\St_{2n+2,1}/{\mathbb G}_m}.}
\end{equation} \ee

\vskip .2cm
As pointed out in the introduction,  apart from the transfer for passage from 
$\BGL_n^{gm,n}$ to $\BN_{\GL_n}(\rmT_n)^{gm,n}$, for the proof of Theorem ~\ref{mainthm.1},  one also 
needs to invoke a transfer map for the finite \'etale map 
\[r_n : {\widetilde {\BN_{\GL_n}(\rmT_{n})^{gm,n}}}= \St_{2n,n}/({\mathbb G}_m \times N_{\GL_{n-1}}(\rmT_{n-1})) \ra  \St_{2n,n}/ N_{\GL_{n}}(\rmT_{n}) = {\BN_{\GL_n}(\rmT_{n})^{gm,n}}.\] 
We also need to know that such a transfer map has reasonable properties, like compatibility with base-change,
and agreement with Gysin maps defined on orientable generalized motivic cohomology theories. The purpose of the last short section of the paper 
is to set-up such a transfer and establish these basic properties for it: see ~\eqref{rel.transf} for the definition of such a transfer.
Let 
\be \begin{equation}
\label{transf.tau.0}
\tau_n: \Sigma_{\T}^{\infty} \BN_{\GL_n} (\rmT_n)^{gm,n}_+   \ra \Sigma_{\T}^{\infty}({\widetilde {\BN_{\GL_n}(\rmT_n)^{gm,n}}})_+
\end{equation} \ee
denote the corresponding transfer defined as in ~\eqref{rel.transf},   and let 
\be \begin{equation}
\label{zeta.n}
\zeta_n: \Sigma_{\T}^{\infty} \BN_{\GL_n}(\rmT_n)^{gm,n}_+ {\overset {\tau_n} \ra } \Sigma_{\T}^{\infty}({\widetilde {\BN_{\GL_n}(\rmT_n)^{gm,n}}})_+ {\overset {\pi_n} \ra } \Sigma_{\T}^{\infty}\rmB {\mathbb G}_m^{gm,n} {\overset {j_n} \ra} \Sigma_{\T}^{\infty}\rmB {\mathbb G}_m
\end{equation} \ee
denote the composition, where the map $\pi_n $ is the composition of the map $\Sigma_{\T}^{\infty}\bar u_{n+}$ followed by the
map that sends the base point $+$ to the base point of $\rmB {\mathbb G}_m^{gm,n}$ as in section ~\ref{change.base.pts}.
The last map, denoted $j_n$, is the obvious one sending a finite dimensional approximation of $\rmB {\mathbb G}_m$ to the direct limit of such approximations.
Let 
\be \begin{equation}
\label{bar.q.n}
\bar q_n: \rmQ (\BN_{\GL_n}(\rmT_n)^{gm,n}_+) \ra \BGL_{\infty}
\end{equation} \ee
denote the composition 
\[\rmQ (\BN_{\GL_n} (\rmT_n)^{gm,n}_+)  \ra \rmQ( \BN_{\GL_{\infty}}(\rmT)) {\overset {\rmQ(p)} \ra}\rmQ(\BGL _{\infty}) {\overset q \ra} \BGL _{\infty}.\]
Then a key result is the following Proposition, which we show also proves Proposition ~\ref{key.comm.triangle.0}.
\begin{proposition}
\label{key.diagms}
 Assume the above situation. Then the following diagrams commute:
 \be \begin{equation}
 \label{key.comm.triangle.1}
 \xymatrix{ {\Sigma_{\T}^{\infty} \BN_{\GL_n} (\rmT_n)^{gm,n}_+} \ar@<1ex>[dr]^{\zeta_n} \ar@<1ex>[d]   \\
 	{\Sigma_{\T}^{\infty} \BN_{\GL_{n+1}}(\rmT_{n+1})^{gm,n+1}_+} \ar@<1ex>[r]_(.65){\zeta_{n+1}} & {\Sigma_{\T}^{\infty}\rmB{\mathbb G}_m}}
 \end{equation}\ee 
\be \begin{equation}
\label{key.comm.triangle.2}
 \xymatrix{{[\rmX, \rmQ (\BN_{\GL_n}(\rmT_n)^{gm,n}_+)]} \ar@<1ex>[r]^(.6){\zeta_{n*}}  & {[\rmX, \rmQ (\rmB{\mathbb G}_m)]}  \ar@/^4pc/[dd]^{\lambda_*}\\
 {[\Sigma _{\T}^{\infty}\rmX, \Sigma_{\T}^{\infty}(\BN_{\GL_n}(\rmT_n)^{gm,n}_+)]} \ar@<1ex>[u]^{\simeq} \ar@<1ex>[r]^(.6){\zeta_{n*}} \ar@<1ex>[dr]_(.6){\bar q_{n*}} & {[\Sigma_{\T}^{\infty}\rmX, \Sigma_{\T}^{\infty}\B{\mathbb G}_m]}  \ar@<-1ex>[u]^{\simeq} \ar@<1ex>[d]^{\lambda'_*} \\
 	& {{\tilde \rmK^{0,0}(\rmX)} =[\rmX, \BGL_{\infty}]} }
\end{equation} \ee 
\end{proposition}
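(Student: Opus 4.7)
The proposition packages two essentially independent compatibility statements, so I treat them separately. For diagram ~\eqref{key.comm.triangle.1}, I need to check that each of the three pieces composing $\zeta_n = j_n \circ \pi_n \circ \tau_n$ is natural in $n$. The compatibility of the projections $\bar{u}_n$ and $\bar{\phi}_n$ with the closed immersions $i_n\colon \St_{2n,n}\to \St_{2n+2,n+1}$ is precisely what diagrams ~\eqref{compatible.phi} and ~\eqref{compatible.p} record, and the inclusions $j_n$ into the colimit defining $\rmB{\mathbb G}_m$ are coherent by construction. The remaining ingredient is compatibility of the transfers $\tau_n$ with the closed immersions $i_n$; this is a base-change property of the transfer defined in ~\eqref{rel.transf}, obtained by viewing the finite \'etale cover $r_{n+1}$ restricted to $\BN_{\GL_n}(\rmT_n)^{gm,n}$ as canonically identified with $r_n$ along $i_n$. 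Concatenating the three squares produces ~\eqref{key.comm.triangle.1}.

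\textbf{Diagram ~\eqref{key.comm.triangle.2}.} The top square is formal: both $\rmQ(\rmB{\mathbb G}_m)$ and $\rmQ(\BN_{\GL_n}(\rmT_n)^{gm,n}_+)$ are infinite $\T$-loop spaces by definition of $\rmQ$, so the vertical identifications are instances of the $(\Sigma_{\T}^{\infty},\Omega_{\T}^{\infty})$-adjunction applied to $\zeta_{n*}$. The outer arc through $\lambda_*$ is also a consequence of adjunction, since by Proposition ~\ref{BGL.Omega}, $\BGL_\infty$ is itself the $0$-th space of an $\Omega_{\T}$-spectrum, so $\lambda\colon \rmQ(\rmB{\mathbb G}_m)\to \BGL_\infty$ corresponds under the adjunction to a map of spectra $\lambda'\colon \Sigma_{\T}^{\infty}\rmB{\mathbb G}_m \to \mathbf{K}$, and the induced maps on $[\rmX,-]$ and $[\Sigma_{\T}^{\infty}\rmX,-]$ agree.

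The substantive content is thus the lower triangle $\lambda'_*\circ\zeta_{n*} = \bar q_{n*}$ at the level of stable maps. Writing $\zeta_n = j_n\circ\pi_n\circ\tau_n$ and using that $\lambda'$ is induced by the diagonal inclusion ${\mathbb G}_m\hookrightarrow \GL_n \hookrightarrow \GL_\infty$, the claim reduces to an identity of stable maps into $\mathbf{K}$ comparing (a) the composite
\[
\Sigma_{\T}^{\infty}\BN_{\GL_n}(\rmT_n)^{gm,n}_+\xrightarrow{\tau_n}\Sigma_{\T}^{\infty}\bigl(\widetilde{\BN_{\GL_n}(\rmT_n)^{gm,n}}\bigr)_+\xrightarrow{\pi_n}\Sigma_{\T}^{\infty}\rmB{\mathbb G}_m^{gm,n}\xrightarrow{\text{diag}}\mathbf{K}
\]
with (b) the map induced by the inclusion $\BN_{\GL_n}(\rmT_n)^{gm,n}\hookrightarrow\BGL_\infty$.

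\textbf{The main obstacle} is verifying this identity. The map $r_n$ is a finite \'etale cover of degree $n$, and representation-theoretically, the standard $n$-dimensional representation of $\rmN_{\GL_n}(\rmT_n)$ is the induction along $r_n$ of the first-coordinate character of ${\mathbb G}_m\times\rmN_{\GL_{n-1}}(\rmT_{n-1})$. Translating this identity of virtual representations into an identity of stable motivic maps requires the compatibility of the transfer $\tau_n$ with the push-forward (Gysin) map for the finite \'etale $r_n$ in the K-theory cohomology theory, which is the main result Theorem ~\ref{compat.Gysin} proven in Section 3. Granting this compatibility, the required identity follows from the observation that the K-theory class pulled back from the universal line bundle on $\rmB{\mathbb G}_m$ along $\bar{u}_n$, and then pushed forward along $r_n$, coincides with the pullback of the universal rank-$n$ bundle on $\BGL_\infty$ along the inclusion of $\BN_{\GL_n}(\rmT_n)^{gm,n}$, which is exactly the content of $\bar q_n$.
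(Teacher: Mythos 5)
Your treatment of diagram~\eqref{key.comm.triangle.2} follows the paper's line: reduce to the lower triangle, reinterpret both sides as K-theory classes, identify the standard $n$-dimensional representation of $\rmN_{\GL_n}(\rmT_n)$ as the induction of a rank-one character along the index-$n$ subgroup, and invoke the identification of the transfer with the finite pushforward (Corollary~\ref{tr.finite.etale.maps}, resting on Theorem~\ref{compat.Gysin}). What you leave implicit, and the paper makes explicit, is the Yoneda-style reduction: it suffices to check commutativity on the tautological class $u \in [\BN_{\GL_n}(\rmT_n)^{gm,n}_+, \rmQ(\BN_{\GL_n}(\rmT_n)^{gm,n}_+)]$. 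That is a small omission; the substance is right.

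The genuine gap is in your treatment of diagram~\eqref{key.comm.triangle.1}. You assert that the ``base-change property of the transfer'' applies because the pullback of $r_{n+1}$ along $i_n$ can be ``canonically identified with $r_n$.'' This is false on degree grounds alone: $r_{n+1}$ is a finite \'etale cover of degree $n+1$, so its base change $\rmP_n$ along $i\colon \BN_{\GL_n}(\rmT_n)^{gm,n}\hookrightarrow \BN_{\GL_{n+1}}(\rmT_{n+1})^{gm,n+1}$ is again degree $n+1$, whereas $r_n$ has degree $n$. So naturality of the transfer for the cartesian square~\eqref{Pn} gives you $\tau_n'$ (the transfer for $r_n'\colon \rmP_n \to \BN_{\GL_n}(\rmT_n)^{gm,n}$), not $\tau_n$, and concatenating the squares as you propose does not close up. What one actually needs, and the paper supplies, is the splitting $\rmP_n \cong \widetilde{\BN_{\GL_n}(\rmT_n)^{gm,n}}\sqcup \BN_{\GL_n}(\rmT_n)^{gm,n}$ (the degree-$n$ cover together with a trivial degree-one sheet given by the section $s$), then additivity of the transfer over disjoint unions (Proposition~\ref{base.change}(i)) to write $\tau_n' = \tau_n + \mathrm{id}$, and finally the observation~\eqref{trivial.map} that the composite $\bar u_{n+1}\circ \tilde i$ collapses the extra trivial sheet to the base point of $\rmB{\mathbb G}_m^{gm,n+1}$, so its contribution dies after postcomposing with $\pi_{n+1}$. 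Without that last vanishing your argument would leave a spurious identity summand and~\eqref{key.comm.triangle.1} would fail to commute.
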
 \ee \noindent
where $\lambda':\Sigma_{\T}^{\infty}\B{\mathbb G}_m \ra f_1({\mathbf K}) =\tilde {\mathbf K}$ is the map of spectra corresponding to the infinite loop-space map $\lambda: \rmQ(\B{\mathbb G}_m) \ra \BGL_{\infty}$. 
The left vertical map in ~\eqref{key.comm.triangle.1} is the obvious map induced by the closed immersion $i_n$ in ~\eqref{closed.imm.St}. Moreover,
$[\quad, \quad ]$ in the middle row of ~\eqref{key.comm.triangle.2} denotes Hom in the motivic  stable homotopy category, while $[\quad, \quad ]$ in the top row and the bottom row denotes
Hom in the unstable pointed motivic  homotopy category. 
\begin{proof}
We will first prove the commutativity of the triangle in ~\eqref{key.comm.triangle.1}. For this, one begins with the cartesian square (which also defines ${\rm P}_n$ and the map $r_n'$):
\be \begin{equation}
\label{Pn}
  \xymatrix{{\rm {P_n}}  \ar@<1ex>[r]^(.2){\tilde i} \ar@<1ex>[d]^{r_{n}'} & {\widetilde {\BN_{\GL_{n+1}}(\rmT_{n+1})^{gm,n+1}} ={\St_{2n+2, n+1}/({\mathbb G}_m} \times {\rm N_{\GL_n}(\rmT_n)})} \ar@<1ex>[d]^{r_{n+1}}\\
  	        {\BN_{\GL_n}(\rmT_n)^{gm,n}=St_{2n,n}/N_{\GL_n}(\rmT_n) } \ar@<1ex>[r]^(.4)i & {\BN_{\GL_{n+1}}(\rmT_{n+1})^{gm, n+1} ={\St_{2n+2, n+1}/{\rm N_{\GL_{n+1}}(\rmT_{n+1})}} }.}
\end{equation} \ee
Observe that the right vertical map, and therefore, the left vertical map also is a finite \'etale map of degree $n+1$.
By Proposition ~\ref{base.change} (which shows the naturality of the transfer with respect to base-change), we observe that 
the square below homotopy commutes:
\be \begin{equation}
\label{base.change.square}
\xymatrix{{\Sigma_{\T}^{\infty}{\rmP_{n,+}}} \ar@<1ex>[r]^{\Sigma_{\T}^{\infty}\tilde i_+} &{ \Sigma_{\T}^{\infty}	\widetilde {\BN_{\GL_{n+1}}	(\rmT_{n+1})}_+ ^{\rm gm,n+1}} \\
	{\Sigma_{\T}^{\infty}\BN_{\GL_n}(\rmT_n)_+^{gm,n}}\ar@<1ex>[r]^{\Sigma_{\T}^{\infty}i_+} \ar@<1ex>[u]^{\tau _n'} & {\Sigma_{\T}^{\infty}{\BN_{\GL_{n+1}}(\rmT_{n+1})_+}^{\rm gm,n+1}} \ar@<1ex>[u]^{\tau_{n+1}} .}
\end{equation} \ee
\vskip .1cm \noindent 

Then a straightforward calculation, as discussed below, shows that ${\rm {P_n}} = {\widetilde {\BN_{\GL_n}(\rmT_{n})^{gm,n}}} \sqcup \BN_{\GL_n}(\rmT_n)^{gm,n}$: the main observation here is that under the identifications in ~\eqref{key.idents}, the map $\BN_{\GL_n}(T_n)^{gm,n} \ra \BN_{\GL_{n+1}}(T_{n+1})^{gm, n+1}$ lifts to ${\widetilde {\BN_{\GL_{n+1}}(T_{n+1})}} ^{gm, n+1}$, which provides
the required splitting. In fact, this splitting may be described in more detail as follows. Observe first that the imbedding $i:\St_{2n,n} \ra \St_{2n+2, n+1}$ is
defined by sending an $n$-frame, $({\mathbf v}_1, \cdots, {\mathbf v}_n)$ in $\St_{2n, n}$ to
the $n+1$-frame, $({\mathbf v}_1, \cdots, {\mathbf v}_n, \bar {\mathbf e}_{n+1})$ in $\St_{2n+2, n+1}$, where $\bar {\mathbf e}_{n+1}$ is the  nonzero
vector chosen as in ~\eqref{closed.imm.St}  that lies in the ambient affine space ${\mathbb A}^{2n+2}$ and is outside of ${\mathbb A}^n \subseteq {\mathbb A}^{n+2}$.
The induced map $\St_{2n,n}/{\rm N_{\GL_n}(\rmT_n)} \ra \St_{2n+2, n+1}/{\rm N_{\GL_{n+1}}}$ is the map
$i: \BN_{\GL_n}(\rmT_n)^{gm,n} \ra \BN_{\GL_{n+1}}(\rmT_{n+1})^{\rm gm,n+1}$ appearing in ~\eqref{Pn}.
\vskip .1cm
One may see that from $i$ one obtains an induced map
\be \begin{equation}
 \label{splitting.1}
 \St_{2n,n}/({\mathbb G}_m \times N_{\GL_{n-1}}(\rmT_{n-1})) \ra  \St_{2n+2,n+1}/({\mathbb G}_m \times N_{\GL_{n}}(\rmT_{n})),
\end{equation} \ee
since the action of ${\mathbb G}_m \times {\rm N}_{\GL_{n-1}}(\rmT_{n-1}) $ on  $\St_{2n,n}$ and the action of 
${\mathbb G}_m \times {\rm N}_{\GL_{n}}(\rmT_{n})$ on $\St_{2n+2, n+1}$ are compatible. (For this
one identifies $\St_{2n,n}$ as imbedded in $\St_{2n+2, n+1}$ using the imbedding $i_n$ considered in ~\eqref{closed.imm.St}, and 
identifies the group ${\mathbb G}_m \times {\rm N}_{\GL_{n-1}}(\rmT_{n-1}) $ with the subgroup 
${\mathbb G}_m \times {\rm N}_{\GL_{n-1}}(\rmT_{n-1}) \times 1 $ of ${\mathbb G}_m \times {\rm N}_{\GL_{n}}(\rmT_{n})$.)
Moreover, one may see that this map is a closed immersion and that one obtains a commutative triangle:
\be \begin{equation}
 \label{splitting.2}
 \xymatrix{{\St_{2n,n}/({\mathbb G}_m \times N_{\GL_{n-1}}(\rmT_{n-1}))} \ar@<1ex>[rr] \ar@<1ex>[dr]&& {{\rm P}_n} \ar@<1ex>[dl]^(.3){r_n'}\\
                       & {\St_{2n,n}/N_{\GL_{n}}(\rmT_{n})}.}
\end{equation} \ee
The left inclined map is a finite \'etale map of degree $n$, while the right inclined map is a finite \'etale map of degree $n+1$. Since the top horizontal
map is also  \'etale (see \cite[Chapter I, Corollary 3.6]{Mi}) and a closed immersion, it is the open (and closed) imbedding of a connected component in ${\rm P}_n$. Let the complement in 
${\rm P}_n$ of ${\St_{2n,n}/({\mathbb G}_m \times N_{\GL_{n-1}}(\rmT_{n-1}))}$ be denoted ${\rm C}_n$. Then the induced map ${\rm C}_n \ra {\St_{2n,n}/N_{\GL_{n}}(\rmT_{n})}$
is a bijective and finite \'etale map, so is an isomorphism, showing that ${\St_{2n,n}/N_{\GL_{n}}(\rmT_{n})} =
\BN_{\GL_n}(\rmT_n)^{gm,n}$ is a split summand of $\St_{2n+2,n+1}/({\mathbb G}_m \times {\rm N_{\GL_n}(\rmT_n)}) = {\widetilde {\BN_{\GL_{n+1}}(\rmT_{n+1})^{gm,n+1}}}$.
(One may also obtain an explicit description of the above splitting as given by  sending the $n$-frame  
$({\mathbf v}_1, \cdots, {\mathbf v}_n)$ in $\St_{2n, n}$ to
the $n+1$-frame, $(\bar {\mathbf e}_{n+1}, {\mathbf v}_1, \cdots, {\mathbf v}_n)$ in $\St_{2n+2, n+1}$.
${\mathbb G}_m$ acts on $\bar {\mathbf e}_{n+1}$ by multiplication by scalars. Let $s$ denote this imbedding of ${\St_{2n,n}/N_{\GL_{n}}(\rmT_{n})}$ in
$\St_{2n+2,n+1}/({\mathbb G}_m \times {\rm N_{\GL_n}(\rmT_n)})$.)
\vskip .2cm
Moreover, one observes from the above description of the splitting ${\rm {P_n}} = {\widetilde {\BN_{\GL_n}(\rmT_{n})^{gm,n}}} \sqcup \BN_{\GL_n}(\rmT_n)^{gm,n}$, that
under the composite map 
\be \begin{multline}
  \label{trivial.map}
\begin{split}
\bar u_{n+1}\circ \tilde i: \rmP_n {\overset {\tilde i} \ra } {\widetilde {\BN_{\GL_{n+1}}(\rmT_{n+1})}}^{gm,n+1} = {\St_{2n+2, n+1}/({\mathbb G}_m \times {\rm N_{\GL_n}(\rmT_n)}) } {\overset {{\it u}_{n+1} } \ra}  {\St_{2n+2, n+1}/({\mathbb G}_m \times {\GL_n}) }\\
 {\overset {\bar \phi_n} \ra}  {\St_{2n+2, 1}/{\mathbb G}_m}
\end{split}
\end{multline}
the copy of $\BN_{\GL_n}(\rmT_n)^{gm,n} = {\St_{2n, n}/{\rm N_{GL_n}(\rmT_n)}}$ in ${\rm P}_n$ (under the above splitting of ${\rm P}_n$) is sent to the base point. 
(Observe that the $n$-frames $({\mathbf v}_1, \dots, {\mathbf v}_n)$ coming from $\St_{2n, n}$, under the above imbedding $s$, get sent to the last $n$-frames.) Since the diagram
\[ \xymatrix{ {\rmB {\mathbb G}_m^{gm, n}} \ar@<1ex>[r] &{\rmB {\mathbb G}_m^{gm, n+1}}\\
{\widetilde {\BN_{\GL_n} (\rmT_{n})^{gm,n}}}	\ar@<1ex>[u]^{\bar u_n} \ar@<1ex>[r]  & {\widetilde {\BN_{\GL_{n+1}} (\rmT_{n+1} ) }^{gm,n+1} }   \ar@<1ex>[u]^{\bar u_{n+1}} }\]
\vskip .1cm \noindent
(which is the same as the diagram ~\eqref{compatible.p}) also commutes, combining these diagrams and composing with the inclusions into $\rmB {\mathbb G}_m$ proves 
the commutativity of the triangle ~\eqref{key.comm.triangle.1}. In fact, the commutativity of the triangle ~\eqref{key.comm.triangle.1} 
is equivalent to the statement that the
composition of maps along the left column followed by the top inclined map is the same (up to homotopy) as the
composition of the bottom map followed by the maps in the right column in the 
 big diagram:
\[\xymatrix{&& {\Sigma_{\T}^{\infty}\B{\mathbb G}_{m}}\\
            {\Sigma_{\T}^{\infty}\B{\mathbb G}_{m}^{\rm gm,n}} \ar@<1ex>[rr]\ar@<1ex>[urr]^{j_n} &&{\Sigma_{\T}^{\infty}\B{\mathbb G}_{m}^{\rm gm, n+1}} \ar@<1ex>[u]^{j_{n+1}}\\
            {\Sigma_{\T}^{\infty}\widetilde {\BN_{\GL_n}	(\rmT_{n})}_+ ^{\rm gm,n}} \ar@<1ex>[r] \ar@<1ex>[u]^{ \pi_n}& {\Sigma_{\T}^{\infty}\rmP_{n,+}} \ar@<1ex>[r]^(.3){\Sigma_{\T}^{\infty}\tilde i_+} &{\Sigma_{\T}^{\infty}\widetilde {\BN_{\GL_{n+1}}(\rmT_{n+1})}_+ ^{\rm gm,n+1}} \ar@<1ex>[u]^{ \pi_{n+1}}\\
            {\Sigma_{\T}^{\infty} {\BN_{\GL_n}(\rmT_{n})}_+ ^{\rm gm,n}} \ar@<1ex>[rr]\ar@<1ex>[u]^{\tau_n} \ar@<1ex>[ur]^{\tau'_n}& &{\Sigma_{\T}^{\infty}{\BN_{\GL_{n+1}}(\rmT_{n+1})_+ ^{\rm gm,n+1}}} \ar@<1ex>[u]^{\tau_{n+1}} .}
\]
The commutativity of the part of the above diagram on and above the second row has already been proven. The bottom square on the
right commutes by ~\eqref{base.change.square}. Proposition ~\ref{base.change}(i)  proves that $\tau_n'= \tau_n+id$, where $id $ denotes the identity map of ${\Sigma_{\T}^{\infty} {\BN_{\GL_n}(\rmT_{n})}_+ ^{\rm gm,n}}$.
Finally the observation in ~\eqref{trivial.map} completes the proof.
\vskip .2cm
Next we consider the commutativity of the diagram ~\eqref{key.comm.triangle.2}. Since the top square there evidently commutes, it suffices 
to consider the commutativity of the bottom triangle there. {\it The key observation is that, in order to prove the
commutativity of the bottom triangle in ~\eqref{key.comm.triangle.2}, it suffices to take $\rmX= \BN_{\GL_n}(\rmT_n)^{gm,n}_+$ and 
show that the triangle commutes for 
the class $u \eps [\rmX, \rmQ (\BN_{\GL_n}(\rmT_n)^{gm,n}_+)]$ denoting the class corresponding to the identity map
$\Sigma_{\T}^{\infty}\rmX_+ \ra \Sigma_{\T}^{\infty}\rmX_+$. }
\vskip .1cm
Let $u$ denote the class considered in the last line. Then $\bar q_{n*}(u) = (q\circ \rmQ(\rmp))_*(u)$ denotes the class of the vector bundle of
 rank $n$ associated to the principal $\rmN_{\GL_n}(\rmT_n)$-bundle over $\BN_{\GL_n}(\rmT_n)^{gm,n}$. One may see this readily as follows:
 First the natural map $p: \BN_{\GL_n}(\rmT_n)^{gm,n} \ra \BGL_{\infty}$ corresponds to the rank $n$ vector bundle over
 $\BN_{\GL_n}(\rmT_n)^{gm,n}$ associated  to the principal $\rmN_{\GL_n}(\rmT_n)$-bundle over $\BN_{\GL_n}(\rmT_n)^{gm,n}$. Then the
 homotopy commutative diagram:
 \be \begin{equation}
      \xymatrix{{\BN_{\GL_n}(\rmT_n)^{gm,n}} \ar@<1ex>[r]^{p} \ar@<1ex>[d]^u & {\BGL_{\infty}} \ar@<1ex>[d] \ar@<1ex>[dr]^{id}\\
                {\rmQ(\BN_{\GL_n}(\rmT_n)^{gm,n})} \ar@<1ex>[r]^{\rmQ({\it p})}  & {\rmQ(\BGL_{\infty})} \ar@<1ex>[r]^q & {\BGL_{\infty}}} \notag
\end{equation} \ee 
completes the proof. {\it We will denote this vector bundle over}
\be \begin{equation}
\label{alpha}
\BN_{\GL_n}(\rmT_n)^{gm,n} =\St_{2n,n}/N_{\GL_n}(\rmT_n) \mbox{ by } \alpha.
\end{equation}
\vskip .2cm
{\it Let $\beta$ denote the line bundle
 associated to the principal ${\mathbb G}_m$-bundle} 
 \be \begin{equation}
 \label{beta}
 \St _{2n,n}/(1 \times  \rmN_{\GL_{n-1}}(\rmT_{n-1})) \ra {\St}_{2n,n}/({\mathbb G}_m  \times \rmN_{\GL_{n-1}}(\rmT_{n-1}))  = {\widetilde {\BN_{\GL_n}(\rmT_n)^{gm, n}}}.
 \end{equation} \ee
 Then $\lambda'_*(\zeta_{n*}(u))$ is the image of $\beta$ under the transfer map $\tau_n^*:\tilde \rmK^{0,0}( {\widetilde {\BN_{\GL_n}(\rmT_n)^{gm, n}}}) \ra \tilde \rmK^{0,0}(\BN_{\GL_n}(\rmT_n)^{gm,n})$,
 where $\zeta_n$ is the map in ~\eqref{zeta.n}. 
 This results from the following observations:
 \begin{enumerate}[\rm(i)]
 \item the map $\lambda'\circ j_n: \Sigma_{\T}^{\infty}\B{\mathbb G}_{}^{\rm gm, n} \ra \Sigma_{\T}^{\infty}\B{\mathbb G}_{m}^{\rm gm} \ra f_1({\mathbf K}) = \tilde {\mathbf K}$ corresponds to a map
 $\tilde \lambda'_n:\St_{2n,1}/{\mathbb G}_m= \B{\mathbb G}_{m}^{\rm gm, n} \ra \BGL_{\infty}$
 \item 
 the map $\tilde \lambda'_n: \St_{2n,1}/{\mathbb G}_m= \B{\mathbb G}_{m}^{\rm gm, n} \ra \BGL_{\infty}$ corresponds
 to the line bundle on 
 \newline \noindent
 $\St_{2n, 1}/{\mathbb G}_m$ corresponding to the ${\mathbb G}_m$-bundle
 $\St_{2n,1} \ra St_{2n, 1}/ {\mathbb G}_m$, 
 \item
 The above line bundle on $\St_{2n, 1}/ {\mathbb G}_m$ pulls back under the map $\bar \phi_n$ (see \eqref{bphi.n}) to the line bundle on $\St_{2n,n}/({\mathbb G}_m \times \GL_{n-1})$
 corresponding to the ${\mathbb G}_m$-bundle $\St_{2n,n}/(1 \times \GL_{n-1} ) \ra \St_{2n, n}/({\mathbb G}_m \times \GL_{n-1})$.
\item The above line bundle on $\St_{2n, n}/({\mathbb G}_m \times \GL_{n-1} )$ pulls back to $\beta$ by the map 
 \newline \noindent
 $u_n:{\widetilde {\BN_{\GL_n}(\rmT_n)^{gm, n}}} = \St_{2n,n}/({\mathbb G}_m \times \rmN_{\GL_{n-1}}(T_{n-1}) ) \ra \St_{2n,n}/({\mathbb G}_m \times \GL_{n-1} )$: see ~\eqref{bar.p.n}).
 \item Recall that $\pi_{n} =\Sigma_{\T}^{\infty}\bar u_{n+}$. The above observations now show that the composite map 
 \[\lambda' \circ j_n \circ \pi_n= \lambda' \circ j_n \circ  \Sigma_{\T}^{\infty}\bar u_{n+} = \lambda' \circ j_n \circ \Sigma_{\T}^{\infty}u_{n+} \circ \Sigma_{\T}^{\infty}\bar \phi_{n+}: \Sigma_{\T}^{\infty}{\widetilde {\BN_{\GL_n}(\rmT_n)}}_+^{gm, n}{\overset {j_n\circ \pi_n} \longrightarrow} {\Sigma_{\T}^{\infty}\B{\mathbb G}_{m}^{\rm gm}} {\overset {\lambda'} \ra} f_1{\mathbf K} = \tilde {\mathbf K}\]
 corresponds to the bundle $\beta$.
\item Now $\lambda' \circ \zeta_n = \lambda' \circ j_n \circ  \pi_{n+} \circ \tau_n = \lambda' \circ j_n \circ  \Sigma_{\T}^{\infty}\bar u_{n+} \circ \tau_n = \lambda' \circ j_n \circ \Sigma_{\T}^{\infty}u_{n+} \circ \Sigma_{\T}^{\infty}\bar \phi_{n+} \circ \tau_n$: see ~\eqref{zeta.n}.
\end{enumerate}
Therefore,  it follows that $\lambda'_*(\zeta_{n*}(u))$ corresponds to the composite map:
\be \begin{equation}
\label{lambda.composite.map}
{\Sigma_{\T}^{\infty} {\BN_{\GL_n}(\rmT_{n})}_+ ^{\rm gm,n}} {\overset {id } \longrightarrow} {\Sigma_{\T}^{\infty} {\BN_{\GL_n}(\rmT_{n})}_+ ^{\rm gm,n}} {\overset {\tau_n} \longrightarrow} \Sigma_{\T}^{\infty}{\widetilde {\BN_{\GL_n}(\rmT_n)}}_+^{gm, n}{\overset {j_n\circ \pi_n} \longrightarrow} {\Sigma_{\T}^{\infty}\B{\mathbb G}_{m}^{\rm gm}} {\overset {\lambda'} \ra} f_1{\mathbf K} = \tilde {\mathbf K}.
\end{equation} \ee
Therefore, at this point, in order to prove the
commutativity of the bottom triangle in ~\eqref{key.comm.triangle.2}, it suffices to prove that 
\be \begin{equation}
\label{transf.tau.1}
\tau_n^*(\beta) = \alpha,
\end{equation} \ee
where $\tau_n^*$ denotes the transfer map induced by the transfer $\tau_n$ on the Grothendieck groups.
This is a straightforward computation making use of the
direct-images of coherent sheaves under finite \'etale maps as discussed in the following paragraphs, as well as Corollary ~\ref{tr.finite.etale.maps}. (See \cite[p. 142]{Beck} for 
very similar arguments in the topological case.)
 \vskip .2cm
Denoting the total space of the vector bundle $\alpha$ by $\rmE(\alpha)$, observe that  $\rmE(\alpha) = \St_{2n,n} {\underset {\rmN_{{\rm GL}_n}(\rmT_n)} \times} \rmW$, where $\rmW$ corresponds to the $n$-dimensional representation of ${\rm GL}_n$ forming the fibers of the vector bundle $\alpha$. 
 We will let $\rmW'$ denote the representation $\rmW$, but viewed as a 
representation of $\rmN_{{\rm GL}_n}(\rmT_n)$. Let $\rmT_{n-1}$ denote the $n-1$-dimensional split torus forming the last $n-1$-factors in
the split maximal torus $\rmT_n$. Observe that on further restricting to the action of the subgroup $\rmH=   {\mathbb G}_m \times \rmN_{{\rm GL}_{n-1}}(\rmT_{n-1})$, $\rmW'$ is the  representation of
$\rmN_{{\rm GL}_n}(\rmT_n)$ that is  induced from a $1$-dimensional representation $\rmV$ of the subgroup $\rmH$, that is,  if $\{\sigma_i\rmH |i=1, \cdots n\}$ is the complete set of left cosets of $\rmH$ in $\rmN_{\GL_n}(\rmT_n)$, then $\rmW' \cong \oplus_{i=1}^n \rmV_i$, with each $\rmV_i =\rmV$ and where $\rmN_{{\rm GL}_n}(\rmT_n)$ acts on $\rmW'$ as follows.
For $g \eps \rmN_{{\rm GL}_n}(\rmT_n)$, if 
 $g.\sigma_i = \sigma_k h$, $h \eps \rmH$, then $g.v_i =h_k$, with $v_i=v_k \eps \rmV$.  (This may be
 seen by observing that the normalizer of the maximal torus $\rmN_{{\rm GL}_n}(\rmT_n)$ is the semi-direct product of the symmetric group $\Sigma_n$ and
  the maximal torus $\rmT_n$.)
  \vskip .1cm
Next observe that $\rmp:{\widetilde {\BN_{\GL_n}(\rmT_n)^{gm, n}}} = \St_{2n,n} /\rmH \ra  \St_{2n,n}/\rmN_{\GL_n}(\rmT_n) = {\BN_{\GL_n}(\rmT_n)^{gm, n}}$ is a finite \'etale map of degree $n$. Then $\beta$ identifies with the 
line bundle, with structure group ${\mathbb G}_m$, defined by 
$ \St_{2n,n} {\underset {\rmH} \times}\rmV$ on ${\widetilde {\BN_{\GL_n}(\rmT_n)^{gm, n}}}= \St_{2n,n} /\rmH$. Clearly $\rmp_*(\beta) = \alpha$. Therefore, it suffices to show
that the transfer $\tau_n^*= \rmp_*$ in this case. That is, it suffices to prove that
the transfer $\tau_n^*$ on Grothendieck groups identifies with the push-forward in this case, which is proven in more generality in 
  the next section of this paper: see Corollary ~\ref{tr.finite.etale.maps}. This, therefore, completes the proof of the proposition.
  	\end{proof}
\vskip .2cm \noindent
{\bf Proof of Proposition ~\ref{key.comm.triangle.0}}. One observes from Proposition ~\ref{key.diagms}  that 
\[\bar q_{n*} = \lambda_* \circ \zeta_{n*},\]
and that the direct system of maps $\{\zeta_{n*}|n\}$ are compatible. Therefore, it follows that the maps $\{\bar q_{n*}|n\}$ are also compatible,
and taking the direct limit, we obtain $\bar q_*= \colimn \bar q_{n*} = \lambda_* \circ \colimn \zeta_{n*} = \lambda_* \circ \zeta_*$, which proves
 Proposition ~\ref{key.comm.triangle.0}. \qed
  	
\section{\bf The motivic transfer and the motivic Gysin maps associated to projective smooth morphisms}
\subsection{}
One may observe from the discussion in ~\eqref{transf.tau.0} and ~\eqref{transf.tau.1} that we need to define 
a transfer for all finite \'etale maps between smooth schemes with reasonable properties, such as compatibility with base-change,
and then show that such a transfer induces the pushforward map at the level of algebraic K-theory. The definition of
such a transfer map for finite \'etale maps is relatively straightforward. In fact there are existing constructions
in the literature, as the referee has pointed out, which either provide such transfers directly or can be used to provide such transfers with a little bit of
effort: we will in fact discuss some of these below in Remarks ~\ref{transf.finite.et.others}.
\vskip .1cm
However, proving that such transfers coincide with the pushforward map on Algebraic K-theory seems a bit involved: in the approach we take, one needs to first show
that these transfers coincide with Gysin maps for all orientable generalized motivic cohomology theories, and then observe that 
such Gysin maps on Algebraic K-theory agree with pushforward maps. A careful examination of the proof of the first statement will
 show that it takes more or less the same effort to define a transfer map for all {\it projective smooth} maps and show that 
 it agrees with a Gysin map up to multiplication by a certain Euler class, which will trivialize when the maps are finite \'etale.
 Therefore, we adopt this approach: observe that, as a consequence we are able to {\it derive the precise relationship between the 
 transfer and Gysin maps associated to all projective smooth maps between smooth quasi-projective schemes on all orientable generalized motivic cohomology theories}: we believe this result is of 
 independent interest, though not used in the rest of the paper in this generality.
	\vskip .1cm
	Therefore, the general context in which we work in this section will be
	the following. Let 
	\be \begin{equation}
	\label{basic.context.0}
	   \rmp: \rmE \ra \rmB
	\end{equation} \ee
	\vskip .1cm \noindent
	denote a {\it projective smooth map} of quasi-projective smooth schemes over the base field. 
	
	\subsection{The definition of a transfer for projective smooth maps}
	In order to motivate this construction, we will quickly review the corresponding {\it Thom-Pontrjagin construction} in the context of classical algebraic topology.  Here $\rmp: \rmE \ra \rmB$ will denote a smooth fiber bundle between compact manifolds $\rmE$ and $\rmB$. Then one may obtain a closed
	imbedding of $\rmE$ in $\rmB \times {\mathbb R}^N$ for ${\rm N}$ sufficiently large. We will denote this imbedding by $i$. Therefore, one obtains the Thom-Pontrjagin collapse map
	\be \begin{equation}
	\label{TP}
	{\rm {TP}}:  \rmB_+\wedge {\rm S}^N \ra {\rm Th(\nu)}
	\end{equation} \ee
	where $\nu$ denotes the normal bundle associated to the closed imbedding $i$. (One may recall that this is the starting point of the classical Atiyah duality (see \cite{At}, \cite{SpWh55}, \cite{DP}) as well as
	its \'etale variant as in \cite{J86} and \cite{J87} in the context of \'etale homotopy theory as in \cite{AM}.) 
	\vskip .1cm
	We proceed to define a corresponding construction in the motivic context, making use of {\it the Voevodsky collapse} in the
	place of the Thom-Pontrjagin collapse. In the situation in ~\eqref{basic.context.0}, as the schemes $\rmE$ and $\rmB$ are assumed to be
	quasi-projective, one obtains a closed immersion $i: \rmE \ra \rmB \times {\mathbb P}^N$ for a large enough $\rmN$. Therefore, the discussion
	in \cite[Proposition 2.7, Lemma 2.10 and Theorem 2.11]{Voev} (see also \cite[\S 10.4]{CJ20}) provides the Voevodsky collapse map
	\be \begin{equation}
	\label{V.collapse}
	 \rmV: \rmB_+ \wedge \T^n \ra {\rm {Th(\nu)}}
	\end{equation} \ee
	for a suitably large $n$, and where $\nu$ denotes the vector bundle on $\rmE$ we call the {\it virtual normal bundle}: see \cite[\S 10.8]{CJ20}. (See also \cite[5.3]{Ho} for a discussion on the collapse, which in this framework is originally due to Voevodsky.)
\vskip .2cm	
Let $\tau=\tau_{\rmE/\rmB}	$ denote the relative tangent bundle associated to $\rmp: \rmE \ra \rmB$. 	Assume the relative dimension of $\rmp$ is ${\rm d}$.
 Then it follows from \cite[Proposition 2.7 through Theorem 2.11]{Voev} (see also \cite[10.4, Definition 10.8]{CJ20}) that $\nu\oplus \tau $ is a trivial bundle on pull-back to $\widetilde \rmE$, where $\widetilde \rmE$ is a (functorial) affine
 replacement of $\rmE$ provided by the technique of Jouanolou (see \cite{Joun}).
 \begin{definition}
 Therefore, we may define the Becker-Gottlieb transfer
in the situation of ~\eqref{basic.context.0}  
as follows:
\be \begin{equation}
  \label{rel.transf}
  tr : \rmB_+ \wedge \T^n {\overset \rmV \ra} {\rm {Th(\nu)}} {\overset {i_{\nu}} \ra } Th(\nu\oplus \tau) \simeq \rmE_+ \wedge \T^n
\end{equation} \ee
\vskip .1cm \noindent
where $i_{\nu}$ is the map induced by the obvious inclusion $\nu \ra \nu \oplus \tau$. (See, for example, the proof of \cite[Theorem 4.3]{BG75}.)
\end{definition}
\vskip .2cm
\begin{proposition}
 \label{base.change}(Some basic properties of the transfer.) 
 \vskip .1cm
 (i) Assume that in ~\eqref{basic.context.0}, $\rmE = \rmE_0 \sqcup \rmE_1$. Denoting the
 corresponding transfers $tr_i: \rmB_+\wedge \T^n \ra \rmE_{i,+} \wedge \T^n$, and $tr: \rmB_+\wedge \T^n \ra \rmE_+ \wedge \T^n$,
 $tr^* = tr_0^*+tr_1^*$ in any generalized motivic cohomology theory.
 \vskip .1cm
 (ii) In case $\rmE= \rmB$ in ~\eqref{basic.context.0}, and the map $\rmp$ is the identity map on $\rmB$, then
 $tr^* = id$ on any orientable generalized motivic cohomology theory.
 \vskip .1cm
(iii) Assume that the square
 \begin{equation}
	\label{basic.context.1}
	   \xymatrix{{\rmE'} \ar@<1ex>[r] \ar@<1ex>[d]^{\rmp'} & {\rmE} \ar@<1ex>[d]^{\rmp}\\
	   	  {\rmB'} \ar@<1ex>[r] & {\rm B} }
	\end{equation} \ee
is cartesian.
Then we obtain the following homotopy commutative diagram of transfer maps:
\[\xymatrix{{\rmB_+ \wedge \T^n} \ar@<1ex>[r]^{\rmV} & {\rm {Th(\nu)}} \ar@<1ex>[r]^(.3){i_{\nu}} & {\rm Th(\nu\oplus \tau) \simeq \rmE_+ \wedge \T^n}\\
            {\rmB'_+ \wedge \T^n} \ar@<1ex>[r]^{\rmV'} \ar@<1ex>[u] & {\rm {Th(\nu')}} \ar@<1ex>[r]^(.3){i'_{\nu}} \ar@<1ex>[u]& {Th(\nu'\oplus \tau') \simeq \rmE'_+ \wedge \T^n,} \ar@<1ex>[u]}
\]
where $\rmV'$ and $i'_{\nu}$ are the maps corresponding to $\rmV$ and $i_{\nu}$ when $\rmB$ and $\rmE$ are replaced by $\rmB'$ and $\rmE'$.
\end{proposition}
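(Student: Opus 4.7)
The strategy is to derive each of the three statements from compatibility properties of the two building blocks of the transfer $tr$: the Voevodsky collapse map $\rmV$ arising from a closed immersion into $\rmB\times\P^N$, and the natural inclusion $i_\nu: Th(\nu) \to Th(\nu\oplus\tau)$ induced by the summand map $\nu \to \nu\oplus\tau$. Once these compatibility properties are isolated from \cite[Prop.~2.7, Lem.~2.10, Thm.~2.11]{Voev}, each of (i)--(iii) drops out.

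For part (i), if $\rmE = \rmE_0 \sqcup \rmE_1$, then the chosen closed immersion $\rmE \hookrightarrow \rmB\times\P^N$ is a disjoint union of closed immersions with disjoint images, so the normal bundle $\nu$ decomposes as $\nu_0 \sqcup \nu_1$ and consequently $Th(\nu) \simeq Th(\nu_0)\vee Th(\nu_1)$. The Voevodsky collapse, being a Pontryagin--Thom type construction that collapses the complement, is compatible with this wedge decomposition, and the $i$-th wedge summand recovers $\rmV_i$. The restriction of $\tau$ to $\rmE_i$ is $\tau_i$, so $i_\nu$ decomposes similarly as a wedge of $i_{\nu_i}$. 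Applying any generalized motivic cohomology functor turns the wedge into a direct sum, yielding $tr^* = tr_0^* + tr_1^*$.

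For part (ii), with $\rmp = \mathrm{id}_\rmB$, the relative tangent bundle $\tau$ vanishes, so $i_\nu$ is the identity on $Th(\nu)$ and $tr$ reduces to the Voevodsky collapse for the immersion $\rmB \hookrightarrow \rmB\times\P^N$. Choosing this immersion as a constant section $b \mapsto (b,\infty)$, the normal bundle $\nu$ is trivial of rank $N$ and $Th(\nu) \simeq \rmB_+ \wedge \T^N$ canonically. Under this identification the Voevodsky collapse is homotopic to the identity of $\rmB_+\wedge\T^N$. For any orientable generalized motivic cohomology theory, the Thom class of the trivial bundle coincides with the canonical $\T^N$-suspension class, so Thom isomorphism identifies $\rmV^*$ with the identity on $\rmh^{*,\bullet}(\rmB)$.

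For part (iii), the cartesian square produces, by pulling back the chosen immersion $\rmE \hookrightarrow \rmB\times\P^N$, a closed immersion $\rmE' \hookrightarrow \rmB'\times\P^N$ (for the same $N$), with $\nu' \cong \nu\times_\rmE \rmE'$ and $\tau' \cong \tau\times_\rmE \rmE'$. This yields induced maps $Th(\nu') \to Th(\nu)$ and $Th(\nu'\oplus\tau') \to Th(\nu\oplus\tau)$. The left square of the diagram commutes by the naturality of the Voevodsky collapse with respect to pullback along $\rmB'\to\rmB$, which is an immediate consequence of the construction in \cite{Voev}, since the open and closed subschemes that define the collapse pull back along base change. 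The right square commutes because the summand inclusion $\nu \to \nu\oplus\tau$ is natural under pullbacks of vector bundles. The main obstacle in this proposition is part (ii): some care is needed in pinning down the precise choice of immersion, the induced Thom trivialization on an orientable theory, and the identification of the Voevodsky collapse with a canonical $\T^N$-suspension equivalence. Parts (i) and (iii) are essentially formal once the naturality and additivity of the two building blocks are in hand.
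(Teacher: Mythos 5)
Your arguments for parts (i) and (iii) are correct and essentially match the paper's (terse) proof, which simply says these are "straightforward from the construction." Both parts really do reduce to the observations you make: the Voevodsky collapse and the inclusion $i_{\nu}$ decompose over a disjoint union, and both are natural with respect to pullback along $\rmB' \ra \rmB$.

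For part (ii), however, there is a genuine gap, and you take a different route from the paper. You assert that, after choosing the immersion $\rmB \ra \rmB\times{\mathbb P}^N$ to be a constant section and trivializing the normal bundle, "the Voevodsky collapse is homotopic to the identity of $\rmB_+\wedge\T^N$." This claim is not obvious and is essentially the content one is trying to prove: the Voevodsky collapse in ~\eqref{V.collapse} is not the naive Pontryagin--Thom collapse of a tubular neighbourhood — as ~\eqref{Voev.setup} shows, it is built from the composite of $i$, the diagonal, and the Segre embedding, and then runs through the Jouanolou affine replacement $\widetilde\rmE$ to trivialize $\nu\oplus\tau$. Verifying that this whole composite is homotopic to the identity is no easier than what Proposition ~\ref{Gysin.agree} and Theorem ~\ref{compat.Gysin} establish, and your own closing remark ("some care is needed... the identification of the Voevodsky collapse with a canonical $\T^N$-suspension equivalence") acknowledges that this is the missing step rather than a small detail.

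The paper avoids this entirely: it proves (ii) by citing Corollary ~\ref{tr.finite.etale.maps}, which gives $tr^* = \rmp_*$ for finite \'etale $\rmp$ (as a consequence of Theorem ~\ref{compat.Gysin}, where the Euler class $eu(\tau)$ trivializes), and then takes $\rmp = id$ so that $\rmp_* = id$. Note that there is no circularity in this forward reference: the proofs of Proposition ~\ref{Gysin.agree}, Theorem ~\ref{compat.Gysin}, and Corollary ~\ref{tr.finite.etale.maps} do not invoke Proposition ~\ref{base.change}. Your plan to argue (ii) directly from the construction would require a nontrivial computation unwinding ~\eqref{Voev.setup}; it is not wrong in spirit, but it does not actually shortcut the argument, and as written it leaves the crucial step unproved. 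If you want a self-contained proof of (ii), you should instead reduce to the statement that the Gysin map of the identity is the identity and then invoke Theorem ~\ref{compat.Gysin}, as the paper does.
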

\begin{proof} The proofs of the first and last statements are straightforward from the construction of the transfer.
The second statement follows Corollary ~\ref{tr.finite.etale.maps} by taking the map $\rmp$ to be the identity.
\end{proof}

\begin{remarks}
 \label{transf.finite.et.others}
 Here we briefly discuss other possible constructions of the transfer associated to {\it finite \'etale maps} $\rmp: \rmE \ra \rmB$, where $\rmE$ and $\rmB$ are quasi-projective smooth schemes over the base field $k$. One may find
 one such construction in \cite[2.3]{RO}, as pointed out by the referee. It is verified in {\it op.cit} that this transfer is compatible with base-change.
 (Making use of the 6-functor formalism in motivic homotopy theory as in \cite{Ay}, the second author has also sketched a construction
 of a transfer for finite \'etale maps. As this is not all that different from the one in \cite[2.3]{RO}, we do not discuss this any further here.)
 Therefore, what one needs to show is that this transfer on Algebraic K-theory agrees with the pushforward. As this is not 
 discussed in \cite{RO}, all one can say is that a proof of this fact will likely follow the same steps as outlined above and discussed below in detail,except that 
 the relative tangent bundle to the map $\rmp$ will be trivial in this case.
 \vskip .1cm
 The referee has also pointed out that the discussions in \cite{EHK+} and \cite{HJN+}, making use of framed correspondences, provide
 a transfer map for finite \'etale maps that identify with pushforwards (of vector bundles) on Algebraic K-theory.
\end{remarks}
\vskip .2cm
In the rest of this section, we do the following:
\begin{enumerate}[\rm(i)]
\item Making use of the same Voevodsky-collapse used in the construction of the transfer, we proceed to define a Gysin map
associated to projective and smooth maps between smooth quasi-projective schemes in all {\it orientable} generalized motivic cohomology theories.
\item Then we show that this Gysin map agrees with the Gysin maps defined by more traditional means, typically by factoring the  given 
map $\rmp: \rmE \ra \rmB$  as the composition of a closed immersion of $\rmE$ into a relative projective space $\rmB \times {\mathbb P}^n$ followed
 by the projection $\pi: \rmB \times {\mathbb P}^n \ra \rmB$. 
\item At this point, standard comparison results (see \cite[2.9.1]{P09} which invokes \cite[3.16, 3,17 and 3.18]{T-T}) show that the above Gysin maps identify with the push-forward maps on the Algebraic K-theory 
of smooth quasi-projective schemes.
\item Finally, we show that on orientable generalized motivic cohomology theories, the map induced by the transfer and the Gysin maps constructed below differ only by multiplication by
 the Euler class of the relative tangent bundle to the map $\rmp$. As a result, when $\rmp$ is a finite \'etale map, the relative tangent bundle to the map $\rmp$ trivializes and the map 
 induced by the transfer agrees with the push-forward on Algebraic K-theory.
\end{enumerate}

	\vskip .2cm
	\subsection{Gysin maps associated to projective smooth maps on orientable generalized motivic cohomology theories}
	
	\vskip .2cm
	We begin by quickly reviewing the corresponding situation in Algebraic Topology.  
	For any generalized cohomology theory $\rmh^*$, the Thom-Pontrjagin collapse in ~\eqref{TP} induces the 
	map 
	\[{\rm TP}^*: \rmh^*({\rm Th(\nu)}) \ra \rmh^*(\rmB_+ \wedge {\rm S}^N).\]
	We will further assume that $\rmh^*$ is an orientable cohomology theory in the sense that it has 
	a Thom-class $\rmT(\nu) \eps \rmh^{c}({\rm {Th(\nu)}})$ (where $c$ is the codimension of $\rmE$ in $\rmB \times \rmS^N$), so that cup product with this class defines 
	the Thom-isomorphism: $\rmh^*(\rmE) \ra \rmh^{*+c}({\rm {Th(\nu)}})$. Moreover, in this case one also observes the suspension isomorphism: 
	$\rmh^*(\rmB_+ \wedge \rmS^N ) \cong \rmh ^{*-N}(\rmB)$. Thus the composition 
	\be \begin{equation}
	\label{Gysin.TP.coll}
	\rmp_*: \rmh^*(\rmE) {\overset {\cup \rmT(\nu) } \ra } \rmh^{*+c}({\rm {Th(\nu)}}) {\overset {{\rm {TP}}^*} \ra} \rmh^{*+c}(\rmB_+ \wedge \rmS^N) \cong \rmh^{*+c-N}(\rmB)
	\end{equation} \ee
	defines a Gysin map.  One may observe that if the relative dimension of $\rmE$ over $\rmB$ is $d$, then ${\rm c=N-d}$, so that $\rmh^{*+c-N}(\rmB) = \rmh^{*-d}(\rmB)$ as required of a Gysin map. 
	\vskip .2cm
	We proceed to define a corresponding Gysin map in the motivic context, {\it for orientable generalized motivic cohomology theories} in the sense of  \cite[\S 2]{PY02} (see also \cite{P09}), making use of the Voevodsky collapse in the
	place of the Thom-Pontrjagin collapse. In the situation in ~\eqref{basic.context.0}, as the schemes $\rmE$ and $\rmB$ are assumed to be
	quasi-projective, one obtains a closed immersion $i: \rmE \ra \rmB \times {\mathbb P}^N$ for a large enough $\rmN$. 
	In this context, we recall the Voevodsky collapse
	\be \begin{equation}
	\label{V.collapse.2}
	 \rmV: \rmB_+ \wedge \T^n \ra {\rm {Th(\nu)}}
	\end{equation} \ee
	as discussed above in ~\eqref{V.collapse}.
	It should be clear that, with this collapse map replacing the Thom-Pontrjagin collapse, and generalized
	motivic cohomology theories that are orientable (and bi-graded), one obtains a {\it Gysin map}
	\be \begin{equation}
	\label{Gysin.V.coll}
	\rmp_*: \rmh^{*, \bullet}(\rmE) {\overset {\cup \rmT(\nu) } \ra } \rmh^{*+2c, \bullet +c}({\rm {Th(\nu)}}) {\overset {\rmV^*} \ra} h^{*+2c, \bullet +c}(\rmB_+ \wedge \T^n) \cong \rmh^{*+2c-2n, \bullet +c-n}(\rmB) = \rmh^{*-2d, \bullet -d}(\rmB),
	\end{equation} \ee
	\vskip .1cm \noindent
	if ${\rm d}$ is the relative dimension of $\rmE$ over $\rmB$, $\rmT(\nu)$ is the Thom-class of the bundle $\nu$, and ${\rm c}$ is the rank of the vector bundle $\nu$.
	\vskip .2cm 
	Next we proceed to show that the Gysin map defined above indeed agrees with Gysin maps that are defined by other more traditional means,
	such as in \cite{P09} or \cite[\S 4 and \S 5]{PY02}. (See also \cite{Deg}.) For this,
	we need to first recall the framework for defining the Voevodsky collapse.
	One may observe from \cite[pp. 69-70]{Voev} that one needs to consider the sequence of 
	closed immersions 
	\be \begin{equation}
	\label{Voev.setup}
	\rmE {\overset i \ra} \rmB\times {\mathbb P}^d {\overset {id \times \Delta} \longrightarrow} \rmB \times {\mathbb P}^d \times {\mathbb P}^d {\overset {id \times Segre} \longrightarrow}\rmB \times {\mathbb P}^{d^2+2d},
	\end{equation} \ee
	where {\rm Segre} denotes the Segre imbedding. {\it We will let ${\rm m= d^2+2d}$ henceforth}. Let $\nu$
	denote the normal bundle to the above composite closed immersion and let $c$ denote
	the codimension of this closed immersion. Then one obtains the
	following sequence of maps:
	\be \begin{multline}
	\begin{split}
	\label{Gysin.2}
	\rmh^{*, \bullet}(\rmE) {\overset {\cup \rmT(\nu)} \longrightarrow} \rmh^{*+2c, \bullet +c}(\Th(\nu)) \cong \rmh^{*+2c, \bullet +c}(\rmB \times {\mathbb P}^{m}/ (\rmB \times {\mathbb P}^{m} - \rmE)) {\overset {Gysin_1'} \ra } \rmh^{*+2c, \bullet +c}(\rmB \times {\mathbb P}^{m})\\
	 {\overset {Gysin_2} \ra } \rmh^{*+2c-2m, \bullet +c-m}(\rmB). 
	 \end{split}
	\end{multline} \ee
	Here the map denoted $Gysin_1'$ precomposed with the cup product with the Thom class $\rmT(\nu)$  is the usual Gysin map associated to the composite closed immersion $\rmE \ra \rmB \times {\mathbb P}^{m}$ (see \cite[\S 4]{PY02}) and the map denoted $Gysin_2$ is 
	the usual Gysin map associated to the projection $\rmB \times {\mathbb P}^{m} \ra \rmB$: see \cite[Definition 5.1]{PY02}. 
	\vskip .2cm
	\subsubsection{Deformation to the normal cone}
	\label{deform.norm.cone}
	In order to relate the Gysin maps in ~\eqref{Gysin.V.coll} and ~\eqref{Gysin.2}, we first invoke the technique of deformation to the normal cone
	from \cite[1.2.1, Theorem 1.2]{P09}. (See also \cite[2.2.8 Theorem]{P}). Let $i: \rmY \ra \rmX$ denote a closed immersion of smooth schemes of finite type over $k$ with normal bundle ${\mathcal N}$.
	Then there exists a smooth scheme $\tilde \rmX$ together with a smooth map $p: \tilde \rmX \ra {\mathbb A}^1$ and a closed immersion $i: \rmY \times {\mathbb A}^1 \ra \tilde \rmX$, so that
	the composition $p \circ i: \rmY \times {\mathbb A}^1 \ra {\mathbb A}^1$ coincides with the projection $\rmY \times {\mathbb A}^1 \ra {\mathbb A}^1$. Moreover the following additional properties hold:
	\begin{enumerate}
	 \item The fiber of $p$ over $1 \in {\mathbb A}^1$ is isomorphic to $\rmX$ and the base change of $i$ by the imbedding $1 \in {\mathbb A}^1$
	 corresponds to the given imbedding $\rmY \ra \rmX$.
	 \item The fiber of $p$ over $0 \in {\mathbb A}^1$ is isomorphic to ${\mathcal N}$ and the base change of $i$ by the imbedding $0 \in {\mathbb A}^1$
	 corresponds to the $0$-section imbedding $\rmY \ra {\mathcal N}$.
	 \item If $\rmZ \ra \rmY$ is a closed immersion of a smooth subscheme of $\rmY$, then one obtains the following diagram
	 \[({\mathcal N}, {\mathcal N} -\rmZ) {\overset {i_0} \ra} (\tilde \rmX, \tilde \rmX - \rmZ \times {\mathbb A}^1) {\overset {i_1} \leftarrow} (\rmX, \rmX - \rmZ),\]
	 and hence the following diagram 
	 for any orientable generalized motivic
	 cohomology theory $\rmh^{*, \bullet}$ with both the horizontal maps being isomorphisms: \footnote{This may be viewed as a cohomology-variant of the purity Theorem: see \cite[Theorem 2.23, p. 115]{MV}.}
	 \be \begin{equation}
	   \label{deform.norm.cone.1}
	  \rmh^{*, \bullet}({\mathcal N}, {\mathcal N} - \rmZ) {\overset {i_0^*} \leftarrow} \rmh^{*, \bullet}( \tilde \rmX, \tilde \rmX - \rmZ \times {\mathbb A}^1) {\overset {i_1^*} \ra} \rmh^{*, \bullet}(\rmX, \rmX - \rmZ).
	 \end{equation} \ee
        \item Moreover, in the above situation the normal bundle to the composite closed immersion $\rmZ \ra \rmY \ra {\mathcal N}$ is isomorphic
        to the sum ${\mathcal N}_{\rmZ, \rmY} \oplus {\mathcal N}_{|\rmZ}$, where ${\mathcal N}_{\rmZ, \rmY}$ denotes the normal bundle associated to the closed immersion $\rmZ \ra \rmY$.
	\end{enumerate}

	\begin{proposition} 
		\label{Gysin.agree}
		Assume the above situation. Then the Gysin map defined in ~\eqref{Gysin.V.coll} agrees with the Gysin map defined in ~\eqref{Gysin.2}.
	\end{proposition}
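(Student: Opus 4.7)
The plan is to exploit the factorization of the Voevodsky collapse through a natural map of quotient spaces, and then to invoke the defining property of the Gysin map for the trivial projective bundle projection. Introduce the quotient maps
$$q_1: \rmB \times \mathbb{P}^m \to (\rmB \times \mathbb{P}^m)/(\rmB \times \mathbb{P}^m - \rmE) \quad\text{and}\quad q_2: \rmB \times \mathbb{P}^m \to (\rmB \times \mathbb{P}^m)/(\rmB \times \mathbb{P}^{m-1}),$$
together with the canonical further-quotient $\pi: (\rmB \times \mathbb{P}^m)/(\rmB \times \mathbb{P}^{m-1}) \to (\rmB \times \mathbb{P}^m)/(\rmB \times \mathbb{P}^m - \rmE)$, which exists after a generic choice of the hyperplane $\mathbb{P}^{m-1}$ ensuring $\rmE$ avoids $\rmB \times \mathbb{P}^{m-1}$. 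These maps fit into a commutative triangle with $\pi \circ q_2 = q_1$.

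The first step is to observe that, by construction (\cite[Proposition 2.7 through Theorem 2.11]{Voev}), the Voevodsky collapse $\rmV$ of ~\eqref{V.collapse.2} is the composite of the identification $\rmB_+ \wedge \T^m \simeq (\rmB \times \mathbb{P}^m)/(\rmB \times \mathbb{P}^{m-1})$, the map $\pi$, and the purity equivalence $(\rmB \times \mathbb{P}^m)/(\rmB \times \mathbb{P}^m - \rmE) \simeq \Th(\nu)$. Consequently $\rmV^*$ equals $\pi^*$ followed by the suspension isomorphism. On the Gysin side of ~\eqref{Gysin.2}: by \cite[\S 4]{PY02}, after the purity identification, the map $Gysin_1'$ (taken after cupping with $\rmT(\nu)$) is exactly $q_1^*$, and by the projective bundle formula (\cite[Definition 5.1]{PY02}), the Gysin $Gysin_2$ for the trivial projective bundle projection is characterized by $Gysin_2 \circ q_2^* = \operatorname{id}$, where the source of $q_2^*$ is identified with $\rmh^{*-2m, \bullet-m}(\rmB)$ via the suspension isomorphism. (This is the content of the projective bundle formula: $q_2^*$ realizes $\rmh^{*-2m, \bullet-m}(\rmB)$ as the ``top'' summand of $\rmh^*(\rmB \times \mathbb{P}^m)$, and $Gysin_2$ is the projection onto this summand.)

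Combining these observations with the relation $\pi \circ q_2 = q_1$, for any $\alpha \in \rmh^{*,\bullet}(\rmE)$ we obtain
$$Gysin_2 \circ Gysin_1' (\alpha \cup \rmT(\nu)) = Gysin_2(q_1^*(\alpha \cup \rmT(\nu))) = Gysin_2(q_2^*(\pi^*(\alpha \cup \rmT(\nu)))) = \pi^*(\alpha \cup \rmT(\nu)),$$
which, under the suspension isomorphism, is precisely $\rmV^*(\alpha \cup \rmT(\nu))$, the Gysin map of ~\eqref{Gysin.V.coll}.

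The main technical obstacle will be ensuring the compatibility of the two Thom-class conventions: the Thom class $\rmT(\nu)$ of the normal bundle used in $Gysin_1'$ must be matched with the orientation class underlying the projective bundle formula that characterizes $Gysin_2$. My plan is to derive this compatibility from the deformation-to-the-normal-cone equivalence ~\eqref{deform.norm.cone.1} applied to the inclusion $\rmB \times \mathbb{P}^{m-1} \hookrightarrow \rmB \times \mathbb{P}^m$ (whose normal bundle is $\mathcal{O}(1)$), exploiting the naturality of the Thom class construction in any orientable generalized motivic cohomology theory. Once the two conventions are matched, the diagram chase above closes the proof.
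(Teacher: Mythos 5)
There is a genuine gap: the further-quotient map $\pi: (\rmB \times \mathbb{P}^m)/(\rmB \times \mathbb{P}^{m-1}) \to (\rmB \times \mathbb{P}^m)/(\rmB \times \mathbb{P}^m - \rmE)$ that your argument hinges on does not exist in general. For $\pi$ to exist you need $\rmB \times \mathbb{P}^{m-1} \subseteq \rmB \times \mathbb{P}^m - \rmE$, i.e. the composite imbedding $\rmE \hookrightarrow \rmB \times \mathbb{P}^d \hookrightarrow \rmB \times \mathbb{P}^d \times \mathbb{P}^d \hookrightarrow \rmB \times \mathbb{P}^m$ of ~\eqref{Voev.setup} must land in $\rmB \times \mathbb{A}^m$. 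But whenever $\rmp: \rmE \to \rmB$ has positive-dimensional fibers, each fiber of $\rmE$ is a positive-dimensional projective variety sitting in $\mathbb{P}^m$ (for instance, when $\rmE = \rmB \times \mathbb{P}^1$ the fiber is the conic image of the Veronese), and such a variety meets \emph{every} hyperplane; no ``generic choice'' of $\mathbb{P}^{m-1}$ makes the intersection empty. This is not a side issue that can be patched -- the impossibility of moving a projective subvariety off a hyperplane at infinity is precisely the reason one cannot run the naive Thom--Pontrjagin collapse algebraically, and it is exactly what Voevodsky's construction is designed to circumvent. So your reduction of $\rmV$ to ``$q_2$ followed by a further collapse followed by purity'' is not an account of the Voevodsky collapse; it is a description of a map that, in the relevant generality, does not exist.

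The actual structure of Voevodsky's collapse (and hence the proof in the paper) replaces the simple quotient by $(\rmB \times \mathbb{P}^m)/(\rmB \times \mathbb{P}^{m-1})$ with the subtler quotient from \cite[proof of Lemma 2.10]{Voev},
\[
\Th(\cE \oplus j^*(\rmN)) \;\simeq\; \rmB \times \mathbb{P}^m\big/\big((\rmB \times \mathbb{P}^m - \rmB \times (\mathbb{P}^d \times \mathbb{P}^d)) \cup \rmB \times \bH_\infty\big),
\]
where $\bH_\infty$ pulls back to the incidence hyperplane $\bH \subset \mathbb{P}^d \times \mathbb{P}^d$. One does not attempt to avoid the hyperplane; rather, one passes to the affine bundle $\rmB \times \widetilde{\mathbb{P}^d}$ (Jouanolou's trick, $\widetilde{\mathbb{P}^d} = \mathbb{P}^d \times \mathbb{P}^d - \bH$), and replaces $\rmE$ by its pullback $\tilde\rmE$ over it. The normal bundle $\nu$ appearing in ~\eqref{V.collapse.2} is then a sum $\cE_0 = i^*(\tau_{\rmB\times\mathbb{P}^d/\rmB} \oplus \Delta^*(\rmN)) \oplus \nu_1$ whose Thom space is matched up with the quotient above via the identification $j^*(\rmN)\cong\rmp_1^*(\Delta^*(\rmN))$ (Quillen--Suslin over an affine bundle) and the deformation-to-the-normal-cone isomorphisms ~\eqref{deform.norm.cone.1}. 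You correctly anticipate the \emph{shape} of the argument -- factor the collapse through a quotient, recognize $\mathrm{Gysin}_1'$ as a purity/excision comparison and $\mathrm{Gysin}_2$ as the top-cell projection -- but the middle term of the factorization and all the bundle-bookkeeping required to identify its Thom space are where the real content of ~\eqref{Gysin.4} and ~\eqref{Gysin.5} lies, and that content is missing.
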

	\begin{proof}
	
	We will follow the constructions in \cite[pp. 69-70]{Voev}. Accordingly the second
	${\mathbb P}^{\rmd}$ in ${\mathbb P}^{\rmd} \times {\mathbb P}^{\rmd}$ in ~\eqref{Voev.setup} is identified with the dual
	projective space and $\bH$ will denote the incidence hyperplane in ${\mathbb P}^{\rmd} \times {\mathbb P}^{\rmd}$. Then it is observed there that, ${\widetilde {\mathbb P}^d} ={\mathbb P}^d \times {\mathbb P}^d - \bH$, considered as a scheme over ${\mathbb P}^d$ by the
	projection to the first factor ($\rmp_1$) is an affine space bundle: in fact, this is an instance
	of what is known as Jouanolou's trick. 
	\vskip .1cm
	Let $\rmN$ denote the normal bundle to the 
	Segre imbedding of $\rmB \times {\mathbb P}^{\rmd} \times {\mathbb P}^{\rmd}$ in $\rmB \times {\mathbb P}^{\rm n}$. If $j: \rmB \times {\widetilde {\mathbb P}^d} \ra \rmB\times {\mathbb P}^d \times {\mathbb P}^d$ denotes the open immersion, we let $j^*(\rmN)$ be the pull-back of $\rmN$ to $\rmB \times {\widetilde {\mathbb P}^d}$. Since 
	${\widetilde {\mathbb P}^{\rmd}} \ra {\mathbb P}^{\rmd}$ is a torsor for a vector bundle,
	and all vector bundles on affine spaces are trivial by the affirmative solution to the Serre conjecture, one may see that $j^*(\rmN) \cong \rmp_1^*(\Delta^*(\rmN))$, where
	$\Delta: \rmB \times {\mathbb P}^{\rmd} \ra \rmB \times {\mathbb P}^{\rmd} \times {\mathbb P}^{\rmd}$ is the diagonal imbedding.
	\vskip .1cm
	Let $\cE$ denote the pull-back $\rmp_1^*(\tau_{{\rmB\times{\mathbb P}^d}/\rmB})$, (where $\tau_{{\rmB \times {\mathbb P}^d}/\rmB}$ denotes the relative tangent bundle to $\rmB\times {\mathbb P}^d$ over $\rmB$), and let  $\nu_1$ denote the normal bundle to the closed immersion
	$i:\rmE \ra \rmB \times {\mathbb P}^d$. Let $\rmp:  \tilde \rmE \ra \rmE$ denote the map induced by $\rmp_1: \rmB \times
	{\widetilde {\mathbb P}}^d \ra \rmB\times {\mathbb P}^d$ when $\tilde \rmE$ is defined by
	the cartesian square:
	\[\xymatrix{{\tilde \rmE} \ar@<1ex>[r] \ar@<1ex>[d]^{\rmp}& {\rmB \times {\widetilde {\mathbb P}}^d } \ar@<1ex>[d]^{\rmp_1}\\
		        {\rmE} \ar@<1ex>[r] & {\rmB \times {\mathbb P}^d}.} \]
	
	Let $\tilde \nu_1$ denote the normal bundle to the induced closed immersion $\tilde \rmE \ra \rmB \times {\widetilde {\mathbb P}}^d$.
	Then denoting the Thom-class of the bundle $\cE_1=(\cE \oplus j^*(\rmN))_{|\tilde \rmE} \oplus \tilde \nu_1$ by $\rmT(\cE_1)$, we obtain the sequence of maps:
	\be \begin{equation}
	\label{Gysin.3}
	   \rmh^{*, \bullet}(\tilde \rmE) {\overset {\cup \rmT(\cE_1)} \longrightarrow } \rmh^{*+2c, \bullet +c}(\Th(\cE_1)) \ra  \rmh^{*+2c, \bullet +c}(\Th(\cE \oplus j^*(\rmN ))). 
	\end{equation} \ee
	The last map is obtained from the observation that the normal bundle to the composite closed immersion $\tilde \rmE \ra {\rmB \times {\widetilde {\mathbb P}}^d } {\overset {0-section} \longrightarrow} \cE \oplus j^*(\rmN)$ is $\cE_1$: see 
	~\ref{deform.norm.cone}(4) above.
	At this point we make use of the identification $j^*(\rmN) = \rmp_1^*(\Delta^*(\rmN))$, so that $\cE_1 = \rmp^*(i^*(\tau_{\rmB \times {\mathbb P}^d/\rmB} \oplus \Delta^*(\rmN)) \oplus \nu_1)$, and $\cE \oplus \rmN= \rmp_1^*(\tau_{\rmB \times {\mathbb P}^d/\rmB} \oplus \Delta^*(\rmN))$, which then readily provides the commutativity of the following diagram
	 with $\cE_0 = i^*(\tau_{\rmB \times {\mathbb P}^d/\rmB} \oplus \Delta^*(\rmN)) \oplus \nu_1$:
	\vskip .1cm
	\be \begin{equation}
	\label{Gysin.4}
	\xymatrix{{\rmh^{*, \bullet}(\tilde \rmE)} \ar@<1ex>[r]^{\cup \rmT(\cE_1)} \ar@<1ex>[d]^{\cong} & {\rmh^{*+2c, \bullet +c}(\Th(\cE_1))} \ar@<1ex>[r] \ar@<1ex>[d]^{\cong}  & {\rmh^{*+2c, \bullet +c}(\Th(\cE \oplus j^*(\rmN ) ))} \ar@<1ex>[d]^{\cong}\\
	{\rmh^{*, \bullet}(\rmE)} \ar@<1ex>[r]^(.3){\cup \rmT(\cE_0)} & {\rmh^{*+2c, \bullet+c}(\rmB\times {\mathbb P}^{m}/(\rmB \times {\mathbb P}^{m}-E ))} \ar@<1ex>[r] & {\rmh^{*+2c, \bullet+c}(\rmB\times {\mathbb P}^{m}/(\rmB \times {\mathbb P}^{m}-(\rmB \times \Delta {\mathbb P}^d ))) }  .} 
	\end{equation} \ee
	\vskip .1cm
	The left-most vertical map above is an isomorphism as $\rmp:\tilde \rmE \ra \rmE$ is an affine space bundle. To see that the 
	next
	 vertical map is an isomorphism, one needs to observe that 
	\[{\rm Th}(\cE_1) = {\rm Th}(\rmp^*(i^*(\tau_{\rmB \times {\mathbb P}^d/\rmB} \oplus \Delta^*(\rmN)) \oplus \nu_1))  \simeq {\rm Th}(i^*(\tau_{\rmB \times {\mathbb P}^d/\rmB} \oplus \Delta^*(\rmN)) \oplus \nu_1).\]
	At this point we make use of the isomorphisms in ~\eqref{deform.norm.cone.1} and ~\eqref{deform.norm.cone}(4) to obtain the
	 isomorphism:
	\[\rmh^{*, \bullet}({\rm Th}(i^*(\tau_{\rmB \times {\mathbb P}^d/\rmB} \oplus \Delta^*(\rmN)) \oplus \nu_1)) \cong \rmh^{*, \bullet}( (\rmB \times {\mathbb P}^m)/(\rmB \times {\mathbb P}^m - \rmE)).\]
	To see the last vertical map in ~\eqref{Gysin.4} is an isomorphism, one first observes that
	\[{\rm Th}( \cE \oplus j^*(\rmN)) = {\rm Th}(\rmp_1^*(\tau_{{\rmB\times{\mathbb P}^d}/\rmB} \oplus \Delta ^*({\rm N}))) \simeq {\rm Th} (\tau_{{\rmB\times{\mathbb P}^d}/\rmB} \oplus \Delta ^*({\rm N})),\]
        and then adopts a similar argument to obtain the isomorphism:
       \[\rmh^{*, \bullet}({\rm Th} (\tau_{{\rmB\times{\mathbb P}^d}/\rmB} \oplus \Delta ^*({\rm N})) \cong  \rmh^{*, \bullet}((\rmB \times {\mathbb P}^m)/ ((\rmB \times {\mathbb P}^m) - (\rmB \times \Delta {\mathbb P}^d)).\]
	 \vskip .1cm
	We also obtain the following commutative diagram:
	\small
	\vskip .1cm
	\be \begin{equation}
	\label{Gysin.5}
	\xymatrix{{\rmh^{*+2c, \bullet+c}(\rmB\times {\mathbb P}^{m}/((\rmB \times {\mathbb P}^{m}- \rmB \times ({\mathbb P}^d \times {\mathbb P}^d)) \cup \rmB \times \bH_{\infty})) }  \ar@<1ex>[r] \ar@<1ex>[d] & {\rmh^{*+2c, \bullet+c}(\rmB\times {\mathbb P}^{m}/( \rmB \times \bH_{\infty})) } \ar@<1ex>[r]^{\cong} \ar@<1ex>[d]^{} & {\rmh^{*+2c, \bullet+c}(\rmB _+ \wedge \T ^{m})} \ar@<1ex>[d]^{\cong}\\
		{\rmh^{*+2c, \bullet+c}(\rmB\times {\mathbb P}^{m}/(\rmB \times {\mathbb P}^{m}-(\rmB \times {\mathbb P}^d \times{\mathbb P}^d ))) } \ar@<1ex>[r]^(.6){} & 	{\rmh^{*+2c, \bullet+c}(\rmB\times {\mathbb P}^{m})} \ar@<1ex>[r]^{Gysin} & {\rmh^{*+2c-2m, \bullet+c-m}(\rmB)},}
	\end{equation}
	\vskip .2cm \noindent
	\normalsize
	where $\bH_{\infty}$ is a hyperplane in $\rmB \times {\mathbb P}^m$ which pulls back to the incidence hyperplane in $\rmB \times {\mathbb P}^d \times {\mathbb P}^d$ under
	the Segre imbedding. 
	Next we recall the following identification 
	(see \cite[proof of Lemma 2.10]{Voev}):
	\[\Th(\cE \oplus j^*(\rmN)) \simeq \rmB\times {\mathbb P}^{m}/((\rmB \times {\mathbb P}^{m}- \rmB \times ({\mathbb P}^d \times {\mathbb P}^d)) \cup \rmB \times \bH_{\infty}).\]
	This shows that in this case, one obtains a composite {\it collapse map}
	\be \begin{equation}
	 \label{V.collapse.Segre}
	 {\rm V}: \rmB_+ \wedge \T^m \ra \Th(\cE \oplus j^*(\rmN)) \ra \Th(\cE_1),
	\end{equation} \ee
        and hence that one may compose the maps forming the top row of the diagram ~\eqref{Gysin.4} followed
	by the maps forming the top row of the  diagram ~\eqref{Gysin.5}. In view of the fact that $\rmp: \tilde \rmE \ra \rmE$
	is an affine replacement, $\Th(\cE_1) \simeq \Th(\cE_0)$, so that the collapse map in ~\eqref{V.collapse.Segre} defines
	a collapse ${\rmV}:  \rmB_+ \wedge \T^m \ra \Th(\cE_0)$, which differs from the collapse map in ~\eqref{V.collapse} only by 
	the addition of a trivial bundle, and hence a $\T$-suspension of some finite degree on both the source and the target: see \cite[proof of Proposition 2.7 and Theorem 2.11]{Voev}. Therefore, 
	one may now observe that the composition of the maps in the top rows of the two diagrams followed by the suspension isomorphism forming the right most vertical map in the second diagram identifies with the map $\rmp_*$ in ~\eqref{Gysin.V.coll}.
	\vskip .1cm
	Observe that there
	is a natural map:
	\[  \rmh^{*+2c, \bullet+c}(\rmB\times {\mathbb P}^{m}/(\rmB \times {\mathbb P}^{m}-(\rmB \times \Delta {\mathbb P}^d ))) \ra \rmh^{*+2c, \bullet+c}(\rmB\times {\mathbb P}^{m}/(\rmB \times {\mathbb P}^{m}-(\rmB \times {\mathbb P}^d \times{\mathbb P}^d )) .\]
Therefore one may compose the maps forming the bottom rows of the two diagrams ~\eqref{Gysin.4}] and ~\eqref{Gysin.5}.
	The composition of the maps forming the bottom rows of the two diagrams defines the Gysin map in ~\eqref{Gysin.2}. The commutativity of the two diagrams proves these two maps are the same.
	\end{proof}

\begin{theorem}
	\label{compat.Gysin}
	Let $\rmh^{*, \bullet}$ denote a generalized motivic cohomology which is orientable in the above sense. Let $tr$ denote the transfer as in ~\eqref{rel.transf}. Then if $eu(\tau)$ denotes the Euler class
	of the bundle $\tau$, we obtain the relation:
	\be \begin{equation}
	   tr^*(\alpha) = \rmp_*(\alpha \cup eu(\tau)), \alpha \eps  \rmh^{*, \bullet}(\rmE)
	\end{equation} \ee
	\vskip .1cm \noindent
	where $\rmp_*$ denotes the Gysin map defined above in ~\eqref{Gysin.V.coll}. 
\end{theorem}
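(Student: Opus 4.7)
The plan is to unwind the definitions of $tr^*$ and $\rmp_*$ in parallel and compare them term by term, so that the equality reduces to a single cohomological identity about the restriction of the Thom class along $i_\nu$. Writing $\pi_{\nu\oplus\tau}: \nu \oplus \tau \to \rmE$ and $\pi_\nu: \nu \to \rmE$ for the bundle projections, the transfer-induced map $tr^* = \rmV^* \circ i_\nu^*$ becomes, after using the $\T^n$-suspension isomorphism on the target and the identification $\rmE_+ \wedge \T^n \simeq \Th(\nu \oplus \tau)$ (valid because $\nu \oplus \tau$ pulls back to a trivial bundle of rank $n$ on the affine replacement $\widetilde \rmE$ of $\rmE$), the composite
\[
\rmh^{*,\bullet}(\rmE) \xrightarrow{\cup\,\rmT(\nu \oplus \tau)} \rmh^{*+2n,\bullet+n}(\Th(\nu \oplus \tau)) \xrightarrow{i_\nu^*} \rmh^{*+2n,\bullet+n}(\Th(\nu)) \xrightarrow{\rmV^*} \rmh^{*+2n,\bullet+n}(\rmB_+ \wedge \T^n) \xrightarrow{\cong} \rmh^{*,\bullet}(\rmB).
\]
By contrast, the Gysin map $\rmp_*$ is, by the definition in ~\eqref{Gysin.V.coll}, the composite
\[
\rmh^{*,\bullet}(\rmE) \xrightarrow{\cup\,\rmT(\nu)} \rmh^{*+2c,\bullet+c}(\Th(\nu)) \xrightarrow{\rmV^*} \rmh^{*+2c,\bullet+c}(\rmB_+ \wedge \T^n) \xrightarrow{\cong} \rmh^{*-2d,\bullet-d}(\rmB).
\]

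The entire theorem therefore reduces to the cohomological identity
\[
i_\nu^*\bigl(\rmT(\nu \oplus \tau)\bigr) \;=\; \rmT(\nu) \cup \pi_\nu^*\bigl(eu(\tau)\bigr) \quad \text{in } \rmh^{*,\bullet}(\Th(\nu)).
\]
This identity is a direct consequence of the multiplicativity of the Thom class for Whitney sums, $\rmT(\nu \oplus \tau) = q_\nu^* \rmT(\nu) \cup q_\tau^* \rmT(\tau)$, where $q_\nu, q_\tau$ are the projections from $\nu \oplus \tau$ onto its two summands, combined with the elementary relations $q_\nu \circ i_\nu = \mathrm{id}_\nu$ and $q_\tau \circ i_\nu = z_\tau \circ \pi_\nu$ (with $z_\tau: \rmE \to \tau$ the zero section), together with the defining equation $z_\tau^*\bigl(\rmT(\tau)\bigr) = eu(\tau)$ for the Euler class.

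Granting the identity, for any $\alpha \in \rmh^{*,\bullet}(\rmE)$ we then have
\[
i_\nu^*\bigl(\pi_{\nu\oplus\tau}^*(\alpha) \cup \rmT(\nu \oplus \tau)\bigr) \;=\; \pi_\nu^*(\alpha) \cup \rmT(\nu) \cup \pi_\nu^*(eu(\tau)) \;=\; \pi_\nu^*\bigl(\alpha \cup eu(\tau)\bigr) \cup \rmT(\nu),
\]
which is precisely the image of $\alpha \cup eu(\tau)$ under the Thom isomorphism for $\nu$. Applying $\rmV^*$ and the suspension isomorphism now yields $\rmp_*(\alpha \cup eu(\tau))$ by ~\eqref{Gysin.V.coll}, giving $tr^*(\alpha) = \rmp_*(\alpha \cup eu(\tau))$.

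The main obstacle is justifying the multiplicativity of the Thom class in any orientable bi-graded motivic cohomology theory. This is a standard structural input on orientations; in the bi-graded setting it can be extracted from the axiomatics of orientable theories as developed in \cite{PY02} and \cite{P09}, where naturality of Thom classes along bundle maps and the product formula for Whitney sums are both built in. Once this is in hand, everything else in the argument is a formal naturality chase driven by the definitions of $tr$ in ~\eqref{rel.transf} and of $\rmp_*$ in ~\eqref{Gysin.V.coll}, so no further geometric input is required.
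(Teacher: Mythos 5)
Your proposal is correct and takes essentially the same approach as the paper: the paper's proof of Theorem~\ref{compat.Gysin} also reduces to establishing that the Thom isomorphism for $\nu\oplus\tau$ followed by restriction along $i_\nu$ equals cup with $eu(\tau)$ followed by the Thom isomorphism for $\nu$, phrased there as the commutativity of two large diagrams (following the template of \cite[Theorem 4.3]{BG75}). Your version merely makes explicit the key Thom-class identity $i_\nu^*\bigl(\rmT(\nu\oplus\tau)\bigr)=\rmT(\nu)\cup\pi_\nu^*(eu(\tau))$ that the paper encodes diagrammatically; the inputs (multiplicativity of Thom classes, $eu(\tau)=z_\tau^*\rmT(\tau)$, and the identification $\Th(\nu\oplus\tau)\simeq\rmE_+\wedge\T^n$ via the affine replacement) are the same.
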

\begin{proof}
	As shown in \cite[Theorem 4.3]{BG75}, and adopting the terminology as in ~\eqref{V.collapse} and ~\eqref{Gysin.V.coll},  it suffices to prove the commutativity of the following diagram:
	\be \begin{equation}
	   \label{comp.Gysin.1}
	    \xymatrix{{\rmh^{*, \bullet}(\rmE)}  \ar@<1ex>[rr]^{\cup eu(\tau)} \ar@<1ex>[d]^{\cong} &&{\rmh^{*+2d, \bullet+d  }(\rmE)} \ar@<1ex>[r]^{\rmp_*} \ar@<1ex>[d]^{\cup {\rm {T(\nu)}}}  & {\rmh^{*, \bullet} (\rmB) } \ar@<1ex>[d]^{\cong}\\
	    	      {\rmh^{*+2n, \bullet+n}(\rmE_+\wedge \T^n)}   \ar@<1ex>[r]^{\cong}  &  {\rmh^{*+2n, \bullet+n}(\rm {Th(\nu \oplus \tau)})} \ar@<1ex>[r]^{i^*}  & {\rmh^{*+2n, \bullet +n} ({\rm {Th(\nu)}}) } \ar@<1ex>[r]^{\rmV^*} & {\rmh^{*+2n, \bullet +n} (\rmB_+ \wedge \T^n) }.} 
	    \end{equation} \ee
	\vskip .1cm \noindent
	Here $d$ is the relative dimension of $\rmE$ over $\rmB$ and $i$ denotes the map of Thom-spaces induced by the inclusion $\nu \ra \nu \oplus \tau$. The definition of the Gysin map as in ~\eqref{Gysin.V.coll} readily proves the commutativity of the right square, so that it  suffices to prove the commutativity of the left square. This results from the commutativity of the diagram:
	\be \begin{equation}
	\label{comp.Gysin.2}
	\xymatrix{ {\rmh^{*, \bullet}(\rmE)}  \ar@<1ex>[r]^{\cup eu(\tau)}  \ar@<1ex>[d]^{\cup \rmT(\tau)} & {\rmh^{*+2d, \bullet+d  }(\rmE)} \ar@<1ex>[d]^{\cong} \ar@<1ex>[r]^{\cup \rmT(\nu)} & {\rmh^{*+2n, \bullet+n  }({\rm {Th(\nu)}})}  \ar@<1ex>[d]^{\cong}\\
		{\rmh^{*+2d, \bullet+d}({\rm {Th(\tau)}})}  \ar@<1ex>[r] \ar@<1ex>[d]^{\cup \rmT(\nu)}   & {\rmh^{*,+2d \bullet+d}(\rmE(\tau))}  \ar@<1ex>[r]^{\cup \rmT(\pi_1^*(\nu))} 
	             \ar@<1ex>[d]^{\cup \rmT(\pi_1^*(\nu))} & {\rmh^{*+2n, \bullet+n  }({\rm {Th(\pi_1^*(\nu))}})} \ar@<1ex>[dl]^{id}\\
	    {\rmh^{*+2n, \bullet+n}({\rm {Th(\tau \oplus \nu)}})  }  \ar@<1ex>[r] & {\rmh^{*+2n, \bullet+n}({\rm {Th(\pi_1^*(\nu))}} ) } .}
	\end{equation} \ee
	\vskip .1cm \noindent
	Here, if $\alpha$ denotes a vector bundle, ${\rm {Th(\alpha)}}$ ($\rmT(\alpha)$) denotes the Thom space (The Thom-class, \res) of $\alpha$.
	Observe that the composition of the top row and the right vertical map in the left square of ~\eqref{comp.Gysin.1} equals the
	composition of the maps in the top row of ~\eqref{comp.Gysin.2}. The composition of the map in the left column and the first bottom map in ~\eqref{comp.Gysin.1} clearly equals the  composition of the two vertical maps in the left most column of ~\eqref{comp.Gysin.2}.
	Since $\rmE(\tau)$ denotes the total space of the vector bundle $\tau$, we obtain the isomorphism ${\rmh^{*+2d, \bullet+d  }(\rmE)} {\overset {\cong} \ra} {\rmh^{*+2d, \bullet+d}(\rmE(\tau))} $ and also the isomorphism ${\rmh^{*+2n, \bullet +n} ({\rm {Th(\nu)}}) } {\overset {\cong} \ra} {\rmh^{*+2n, \bullet+n  }({\rm {Th(\pi_1^*(\nu))}} )}$, where $\pi_1: {\rm E}(\tau) \ra {\rm E}$ denotes the projection. Moreover, under the above isomorphisms, the map denoted $i^*$ in ~\eqref{comp.Gysin.1} identifies with the bottom most map in
	~\eqref{comp.Gysin.2}. These observations prove the commutativity of the diagram ~\eqref{comp.Gysin.2} and hence the commutativity of the diagram ~\eqref{comp.Gysin.1} as well. 
	\end{proof}
\begin{corollary}
	\label{tr.finite.etale.maps}
	Let $\rmp: \rmE \ra \rmB$ denote a finite \'etale map between smooth quasi-projective schemes. If $\rmh^{*, \bullet}$ is an orientable generalized motivic cohomology theory defined by a motivic spectrum, then one has the equality:
	\[tr^* = \rmp_*\]
	where $tr^*$ denotes the map induced by the motivic Becker-Gottlieb transfer $tr$ \rm(see ~\eqref{rel.transf} \rm) {\it in the above cohomology theory and $\rmp_*$ denotes the Gysin map. Moreover, for Algebraic K-Theory, the Gysin map $\rmp_*$ agrees with 
	the finite pushforward defined for coherent sheaves.}
\end{corollary}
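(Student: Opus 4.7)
My strategy is to read off both claims directly from Theorem~\ref{compat.Gysin} and Proposition~\ref{Gysin.agree}, combined with the cited K-theoretic comparison results. Almost all the heavy lifting has already been carried out; what remains is to observe that the Euler-class correction appearing in Theorem~\ref{compat.Gysin} trivializes for finite étale maps, and then to invoke standard comparisons between the Gysin map and the coherent-sheaf pushforward on Algebraic K-theory.

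For the equality $tr^* = \rmp_*$, I would start by noting that a finite étale map is smooth of relative dimension zero, so the relative tangent bundle $\tau = \tau_{\rmE/\rmB}$ is the rank-zero (zero) vector bundle on $\rmE$. In any orientable theory the Euler class of the rank-zero bundle is the unit $1 \in \rmh^{0,0}(\rmE)$, so cupping with $eu(\tau)$ is the identity operation on $\rmh^{*,\bullet}(\rmE)$. Applying Theorem~\ref{compat.Gysin} in this case yields
\[
tr^*(\alpha) \;=\; \rmp_*(\alpha \cup eu(\tau)) \;=\; \rmp_*(\alpha)
\]
for every $\alpha \in \rmh^{*,\bullet}(\rmE)$, which is the first assertion.

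For the identification of $\rmp_*$ with the finite pushforward on Algebraic K-theory, I would use Proposition~\ref{Gysin.agree} to replace the Voevodsky-collapse Gysin map of~\eqref{Gysin.V.coll} by the classical Gysin map of~\eqref{Gysin.2}, which is built from the factorization $\rmE \hookrightarrow \rmB \times \mathbb{P}^m \twoheadrightarrow \rmB$. Algebraic K-theory is orientable in the sense of \cite{PY02}, and on it the Gysin map for a closed immersion of smooth schemes is realized by the Koszul pushforward, while the Gysin for the projection from a relative projective space is the derived pushforward from a projective bundle. By \cite[2.9.1]{P09}, invoking \cite[3.16, 3.17, 3.18]{T-T}, the composition agrees with the derived pushforward of the composite morphism. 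For a finite étale (hence finite flat) $\rmp$ the higher direct images vanish, so this collapses to the ordinary direct image of locally free sheaves, which is the claimed finite pushforward.

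I do not expect a substantive obstacle here: the Euler-class correction formula (Theorem~\ref{compat.Gysin}) and the agreement of the two Gysin maps (Proposition~\ref{Gysin.agree}) have already been established, and the K-theoretic comparison with the coherent pushforward is standard. The only mildly delicate point is to ensure that the orientability of the K-theory spectrum is invoked correctly and that the cited results from \cite{T-T} apply in the smooth quasi-projective setting; both are routine.
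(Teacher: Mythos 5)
Your proof is correct and follows essentially the same route as the paper's: Theorem~\ref{compat.Gysin} with the triviality of $eu(\tau)$ for a finite \'etale map gives the first equality, and the second is obtained via Proposition~\ref{Gysin.agree} together with the cited comparisons in \cite[2.9.1]{P09} and \cite{T-T}. Your remark that finite \'etale pushforward preserves local freeness (so that no derived correction is needed) matches the paper's observation that such pushforwards send vector bundles to vector bundles.
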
	
\begin{proof} The first statement is an immediate consequence of Theorem ~\ref{compat.Gysin}, once one observes that the Euler class $eu(\tau)$ is trivial, which follows from the fact that $\rmp$ is finite \'etale and $\tau$ denotes the relative tangent bundle of the map $\rmp$. 
The second statement on the Gysin map $\rmp_*$ for Algebraic K-Theory follows from \cite[2.9.1]{P09}, invoking \cite[3.16, 3,17 and 3.18]{T-T}. Observe that pushforward by 
finite \'etale maps send vector bundles to vector bundles, and for smooth quasi-projective schemes over $k$, the K-theory of coherent sheaves identifies with the 
K-theory of vector bundles. 
\end{proof}


\vskip .2cm

\end{document}